\newcommand{\Ker}{\operatorname{Ker}}
   \theoremstyle{plain}
   \newtheorem{thm}{Theorem}[section]
   \newtheorem{prop}[thm]{Proposition}
   \newtheorem{lem}[thm]{Lemma}
   \newtheorem{cor}[thm]{Corollary}
   \theoremstyle{definition}
   \theoremstyle{remark}
	\newtheorem{problem}[thm]{Problem}
\author{V. Manuilov}
\date{}
\address{Moscow State University,
Leninskie Gory 1, Moscow, 
119991, Russia}
\email{manuilov@mech.math.msu.su}
\title{On the $C^*$-algebra of matrix-finite bounded operators}
\begin{document}

\maketitle

\begin{abstract}
Let $H$ be a separable Hilbert space with a fixed orthonormal basis. Let $\mathbb B^{(k)}(H)$ denote the set of operators, whose matrices have no more than $k$ non-zero entries in each line and in each column. The closure of the union (over $k\in\mathbb N$) of $\mathbb B^{(k)}(H)$ is a $C^*$-algebra. We study some properties of this $C^*$-algebra. We show that this $C^*$-algebra is not an $AW^*$-algebra, has a proper closed ideal greater than compact operators, and its group of invertibles is contractible. 

\end{abstract}

\section*{Introduction}

Hilbert spaces often come out with a fixed special orthonormal basis, e.g. $l^2(X)$ for a countable discrete space $X$, and the correspondence between operators and their matrices makes more sense. We got interested in the operators, whose matrices with respect to the fixed basis satisfy the following property: there is some $k\in\mathbb N$ such that each line and each column of the matrix has no more than $k$ non-zero entries. The norm closure of the union (over $k\in\mathbb N$) of these operators turns out to be a $C^*$-algebra, denoted by $\mathbb B_f(H)$. We establish some properties of this $C^*$-algebra, which is, in some aspects, similar to the algebra $\mathbb B(H)$ of all bounded operators on the Hilbert space $H$, e.g. the Kuiper theorem on contractibility of the group of invertibles holds also for invertibles in $\mathbb B_f(H)$. On the other hand, $\mathbb B_f(H)$ is not a von Neumann algebra, and even not an $AW^*$-algebra, and the quotient $\mathbb B_f(H)/\mathbb K(H)$ is not simple.

\section{Matrix-finite bounded operators on a Hilbert space}

Let $H$ be a separable Hilbert space with a fixed orthonormal basis $\{e_n\}_{n\in\mathbb N}$. For $k\in\mathbb N$, denote by $\mathbb B^{(k)}_L(H)$ (resp. $\mathbb B^{(k)}_C(H)$) the set of all bounded operators on $H$ such that each line (resp. each column) of their matrix (with respect to the fixed basis) contains no more than $k$ non-zero entries. Note that 
$$
\mathbb B^{(k)}_L(H)\subset\mathbb B^{(l)}_L(H)\mbox{\ when\ } k<l, \mbox{\ and\ } \mathbb B^{(k)}_C(H)=(\mathbb B^{(k)}_L(H))^*. 
$$
Set also 
$$
\mathbb B^{(k)}(H)=\mathbb B^{(k)}_L(H)\cap\mathbb B^{(k)}_C(H).
$$  

Let $a,b\in\mathbb B(H)$, $A=(a_{ij})$, $B=(b_{ij})$ their matrices.

\begin{lem}\label{sum}
If $a,b\in\mathbb B^{(k)}_L(H)$ then $a+b\in\mathbb B^{(2k)}_L(H)$.

\end{lem}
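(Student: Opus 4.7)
The plan is to reduce the statement to a straightforward counting argument on the indices of non-zero matrix entries, row by row.

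First I would fix an arbitrary row index $i$ and examine the $i$-th row of the matrix of $a+b$. Since matrix addition is entrywise, the $(i,j)$-entry of the matrix of $a+b$ equals $a_{ij}+b_{ij}$. In particular, if $a_{ij}+b_{ij}\neq 0$, then at least one of $a_{ij}, b_{ij}$ is non-zero. Thus the set
\eQ{S_i^{a+b}=\{j\in\mathbb N: a_{ij}+b_{ij}\neq 0\}}
is contained in $S_i^a\cup S_i^b$, where $S_i^a$ and $S_i^b$ are the analogous sets for the matrices of $a$ and $b$.

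Next I would invoke the hypothesis: since $a,b\in\mathbb B^{(k)}_L(H)$, we have $|S_i^a|\leq k$ and $|S_i^b|\leq k$, so $|S_i^{a+b}|\leq |S_i^a|+|S_i^b|\leq 2k$. As $i$ was arbitrary, every row of $a+b$ has at most $2k$ non-zero entries, giving $a+b\in\mathbb B^{(2k)}_L(H)$. One should also briefly confirm that $a+b$ is a bounded operator, which follows from $\mathbb B(H)$ being closed under addition.

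There is no real obstacle here; the argument is essentially the subadditivity of the support function under addition, combined with the trivial bound $|S_i^a\cup S_i^b|\leq |S_i^a|+|S_i^b|$. The bound $2k$ is of course not sharp in general (cancellations can only make the support smaller), but it is what is needed for the stated inclusion.
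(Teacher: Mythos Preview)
Your argument is correct and is exactly the ``obvious'' counting the paper has in mind; the paper's own proof consists of the single word ``Obvious.'' You have simply spelled out the details of that obvious step.
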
 
\begin{proof}
Obvious.

\end{proof}

\begin{lem}\label{product}
If $a,b\in\mathbb B^{(k)}_L(H)$ then $ab\in\mathbb B^{(k^2)}_L(H)$.

\end{lem}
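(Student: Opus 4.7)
The plan is to prove the statement by a direct counting argument on the matrix entries of the product.

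First I would fix a row index $i$ and consider the set $S_i=\{l:a_{il}\neq 0\}$ of column positions where the $i$-th row of $A$ is non-zero. Since $a\in\mathbb B^{(k)}_L(H)$, we have $|S_i|\leq k$. Next, for each $l\in S_i$, the $l$-th row of $B$ is constrained by $b\in\mathbb B^{(k)}_L(H)$, so the set $T_l=\{j:b_{lj}\neq 0\}$ satisfies $|T_l|\leq k$.

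The key observation is that the $(i,j)$-entry of the product, $(ab)_{ij}=\sum_l a_{il}b_{lj}=\sum_{l\in S_i}a_{il}b_{lj}$, can only be non-zero when $j$ lies in $\bigcup_{l\in S_i}T_l$. A union of at most $k$ sets, each of cardinality at most $k$, has cardinality at most $k^2$, so the $i$-th row of the matrix of $ab$ has at most $k^2$ non-zero entries. Since $i$ was arbitrary, $ab\in\mathbb B^{(k^2)}_L(H)$.

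There is essentially no obstacle here — the argument is a one-step set-theoretic bound on supports, and no analytic input (boundedness of the operators, convergence of the sums) is needed beyond ensuring the product is well-defined as an operator, which is given since $a,b\in\mathbb B(H)$. I would present the proof as a couple of sentences of inclusions of support sets with the cardinality estimate $|\bigcup_{l\in S_i}T_l|\leq\sum_{l\in S_i}|T_l|\leq k\cdot k$.
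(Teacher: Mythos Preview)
Your proof is correct and is essentially the same as the paper's: both fix a row index $i$, use $a\in\mathbb B^{(k)}_L(H)$ to bound the support of the $i$-th row of $A$ by $k$ indices, then use $b\in\mathbb B^{(k)}_L(H)$ to bound each corresponding row of $B$ by $k$ indices, concluding that $(ab)_{ij}$ can be non-zero for at most $k^2$ values of $j$. The only difference is notation (your sets $S_i$, $T_l$ versus the paper's enumerated indices $j_1,\ldots,j_k$ and $l_1^{(m)},\ldots,l_k^{(m)}$).
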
 
\begin{proof}
Let $c_{il}=\sum_{j\in\mathbb N}a_{ij}b_{jl}$. Fix $i$. There exist $j_1,\ldots,j_k\in\mathbb N$ such that $a_{ij}=0$ if $j\notin\{j_1,\ldots,j_k\}$. For each $j_m$, $m=1,\ldots,k$, there exist $l_1^{(m)},\ldots,l_k^{(m)}\in\mathbb N$ such that $b_{j_m l}=0$ if $l\notin\{l_1^{(m)},\ldots,l_k^{(m)}\}$. So $c_{il}=0$ for $l\notin\{l_n^{(m)}\}_{n,m=1}^k$, hence the $i$-th line contains no more than $k^2$ non-zero entries.  

\end{proof}

Set $\mathbb B^{(\infty)}(H)=\cup_{k\in\mathbb N}\mathbb B^{(k)}(H)$. This algebra was defined and studied in \cite{Cortinas}, {Abadie-Cortinas}.

\begin{cor}
$\mathbb B^{(\infty)}(H)$ is a unital $*$-subalgebra in $\mathbb B(H)$.

\end{cor}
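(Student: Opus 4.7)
The plan is to verify in turn the defining conditions for a unital $*$-subalgebra: containing the unit, being closed under scalar multiplication, addition, involution, and multiplication. Each of these reduces, given $a\in\mathbb B^{(k)}(H)$ and $b\in\mathbb B^{(l)}(H)$, to showing that the relevant operation produces something in $\mathbb B^{(m)}(H)$ for some $m$, and without loss of generality one may first replace $k$ and $l$ by their maximum, since $\mathbb B^{(k)}_L(H)\subset\mathbb B^{(l)}_L(H)$ (and likewise for columns) when $k\le l$.

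For the unit, the identity operator has exactly one non-zero entry per line and per column, so $1\in\mathbb B^{(1)}(H)\subset\mathbb B^{(\infty)}(H)$. Scalar multiplication preserves the zero pattern, so $\lambda a\in\mathbb B^{(k)}(H)$. For addition, Lemma~\ref{sum} gives $a+b\in\mathbb B^{(2k)}_L(H)$; since the transpose-conjugate interchanges lines and columns, the identical argument applied to $\mathbb B^{(k)}_C(H)=(\mathbb B^{(k)}_L(H))^*$ gives $a+b\in\mathbb B^{(2k)}_C(H)$, so $a+b\in\mathbb B^{(2k)}(H)$. For the involution, the matrix of $a^*$ is the conjugate transpose of that of $a$, so its lines are the (conjugate) columns of $a$ and vice versa; hence $a\in\mathbb B^{(k)}(H)$ implies $a^*\in\mathbb B^{(k)}(H)$.

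The only nontrivial point is closure under multiplication. Lemma~\ref{product} immediately yields $ab\in\mathbb B^{(k^2)}_L(H)$. To control columns, I would apply the involution trick: $(ab)^*=b^*a^*$ with $b^*,a^*\in\mathbb B^{(k)}_L(H)$ by the previous step, whence Lemma~\ref{product} gives $(ab)^*\in\mathbb B^{(k^2)}_L(H)$, i.e.\ $ab\in\mathbb B^{(k^2)}_C(H)$. Combined, $ab\in\mathbb B^{(k^2)}(H)\subset\mathbb B^{(\infty)}(H)$.

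I do not anticipate a real obstacle here; the corollary is a packaging of Lemmas~\ref{sum} and~\ref{product} together with the symmetry $\mathbb B^{(k)}_C(H)=(\mathbb B^{(k)}_L(H))^*$. The only point that deserves a sentence of care is the passage from the one-sided bounds (lines only) in Lemmas~\ref{sum} and~\ref{product} to the two-sided bound defining $\mathbb B^{(k)}(H)$, and this is handled uniformly by taking adjoints.
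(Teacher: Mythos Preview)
Your proof is correct and follows the same approach as the paper, which simply records the corollary as an immediate consequence of Lemmas~\ref{sum} and~\ref{product} together with the relation $\mathbb B^{(k)}_C(H)=(\mathbb B^{(k)}_L(H))^*$. You have merely spelled out the routine verifications in more detail than the paper does.
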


Note that the correspondence between operators and their matrices is more clear for operators in $\mathbb B^{(\infty)}(H)$.

\begin{lem}\label{ulf-matrix}
Let $(a_{ij})_{i,j\in\mathbb N}$ be a matrix such that each line and each column contains no more than $k$ non-zero entries, and there exists $C$ such that $|a_{ij}|<C$ for any $i,j\in\mathbb N$. Then the operator $a$ given by this matrix is bounded. 

\end{lem}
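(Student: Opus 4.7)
The plan is to estimate the operator norm directly using a Schur-test style argument, or equivalently by decomposing $a$ into simpler pieces. I expect to get the clean bound $\|a\|\le kC$.

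The main observation is that the two hypotheses combine perfectly: every row of the matrix has at most $k$ nonzero entries, each of modulus less than $C$, so $\sum_{j}|a_{ij}|<kC$ uniformly in $i$, and symmetrically $\sum_{i}|a_{ij}|<kC$ uniformly in $j$. First, I would make sense of $a$ as a well-defined linear map on finite linear combinations of the $e_n$, which is immediate since each column has finitely many nonzero entries. Then I would bound $|\langle av,w\rangle|$ for $v=\sum v_j e_j$ and $w=\sum w_i e_i$ with finite support by writing
\[
|\langle av,w\rangle|\le \sum_{i,j}|a_{ij}|\,|v_j|\,|w_i|=\sum_{i,j}\bigl(\sqrt{|a_{ij}|}\,|v_j|\bigr)\bigl(\sqrt{|a_{ij}|}\,|w_i|\bigr),
\]
and applying the Cauchy--Schwarz inequality to the indexing pair $(i,j)$. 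The two resulting factors are $\sum_{i,j}|a_{ij}|\,|v_j|^2\le kC\,\|v\|^2$ and $\sum_{i,j}|a_{ij}|\,|w_i|^2\le kC\,\|w\|^2$, the row and column bounds being used separately. This gives $|\langle av,w\rangle|\le kC\,\|v\|\,\|w\|$, hence $\|a\|\le kC$ on the dense subspace of finite linear combinations, so $a$ extends to a bounded operator on $H$.

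An alternative route, which may be the one the author prefers since the paper works with the $\mathbb{B}^{(k)}$ classes structurally, is to use K\"{o}nig's edge coloring theorem on the bipartite graph whose edges are the index pairs $(i,j)$ with $a_{ij}\ne 0$: this graph has maximum degree $\le k$, so its edges split into $k$ matchings. Correspondingly $a=a_1+\cdots+a_k$, where each $a_m$ has at most one nonzero entry per row and per column. Each such $a_m$ acts by $a_m e_j=a_{\sigma(j)j}e_{\sigma(j)}$ on a subset of basis vectors and sends the rest to $0$, where $\sigma$ is injective; orthogonality of the images then gives $\|a_m\|\le C$ directly, and the triangle inequality yields $\|a\|\le kC$.

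I do not anticipate a serious obstacle here: the essential content is the Schur estimate, and the hypotheses have been arranged so that both row and column sums are automatically controlled. The only thing to be slightly careful about is the initial well-definedness of $a$ on the algebraic span of the basis, which follows from the column-finiteness hypothesis; afterwards the bound on the dense subspace extends uniquely to a bounded operator on all of $H$.
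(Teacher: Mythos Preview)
Your proposal is correct; both routes you sketch are valid and each yields the bound $\|a\|\le kC$.

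The paper's proof is closest in spirit to your second (decomposition) approach, but cruder. It splits $a=a^{(1)}+\cdots+a^{(k)}$ by simply taking $a^{(m)}$ to carry the $m$-th nonzero entry of each row. Each $a^{(m)}$ then has a single nonzero entry per row but \emph{not} per column, so the paper must invoke the column hypothesis again to show that each index $j$ is hit at most $k$ times, giving $\|a^{(m)}\|<C\sqrt{k}$ and hence only $\|a\|<Ck^{3/2}$. Your K\"onig edge-coloring refinement makes each summand a genuine partial permutation pattern, so the pieces are bounded by $C$ outright and the extra $\sqrt{k}$ disappears; just note that you are using the infinite (countable) version of the edge-coloring theorem. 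Your Schur-test argument is a genuinely different and more direct route that the paper does not take at all, and it also gives the sharper $kC$. Either of your arguments would serve as a drop-in replacement and in fact improves the constant.
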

\begin{proof}
Let the non-zero elements in the $i$-th line are $a_{i,j_1^{(i)}},\ldots,a_{i,j_k^{(i)}}$. Define matrices $a^{(1)},\ldots,a^{(k)}$ by 
$$
(a^{(m)})_{ik}=\left\lbrace\begin{array}{cl}a_{i,j_m^{(i)}}&\mbox{\ if\ }k=j_m^{(i)}\\0&\mbox{\ if\ }k\neq j_m^{(i)}.\end{array}\right.
$$
Then each matrix $a^{(m)}$ has a single non-zero element in each line, and $a=a^{(1)}+\cdots+a^{(k)}$. 

Let $\xi=(\xi_n)_{n\in\mathbb N}\in H$. Then $a^{(m)}\xi=(a_{1,j_m^{(1)}}\xi_{j_m^{(1)}},a_{2,j_m^{(2)}}\xi_{j_m^{(2)}},\ldots)$, hence
$$
\|a^{(m)}\xi\|^2=|a_{1,j_m^{(1)}}\xi_{j_m^{(1)}}|^2+|a_{2,j_m^{(2)}}\xi_{j_m^{(2)}}|^2+\cdots <C^2(|\xi_{j_m^{(1)}}|^2+|\xi_{j_m^{(2)}}|^2+\cdots)\leq C^2k\sum_{j\in\mathbb N}|\xi_j|^2,
$$
as each $j$ appears not more than $k$ times among the numbers $j_m^{(1)},j_m^{(2)},\ldots$. 

Thus $\|a^{(m)}\|^2<C^2k$. 
As $\|a\|\leq\|a^{(1)}\|+\cdots+\|a^{(k)}\|$, we get $\|a\|<Ck^{3/2}$.

\end{proof}

Denote by $\mathbb B_f(H)$ the norm closure of $\mathbb B^{(\infty)}(H)$. Then it is a $C^*$-algebra.

\begin{lem}\label{matrix}
$M_m(\mathbb B_f(H))\cong\mathbb B_f(H)$ for any $m\in\mathbb N$.

\end{lem}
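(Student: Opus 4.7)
\smallskip

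\textbf{Plan.} The plan is to realise the standard identification $M_m(\mathbb B(H)) \cong \mathbb B(H^{\oplus m}) \cong \mathbb B(H)$ via a carefully chosen unitary, and then show that on both sides the subalgebras $M_m(\mathbb B^{(\infty)}(H))$ and $\mathbb B^{(\infty)}(H)$ correspond to each other; taking norm closures then gives the claim.

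First I would fix a bijection $\phi\colon\{1,\dots,m\}\times\mathbb N\to\mathbb N$ and let $U\colon H^{\oplus m}\to H$ be the unitary sending the basis vector $e_n$ in the $p$-th copy of $H$ to $e_{\phi(p,n)}$. Then $\Ad(U)\colon \mathbb B(H^{\oplus m})\to\mathbb B(H)$ is a $*$-isomorphism, and under the standard identification of $\mathbb B(H^{\oplus m})$ with $M_m(\mathbb B(H))$, an element $a=(a_{pq})$ with $a_{pq}$ having matrix entries $(a_{pq})_{i'j'}$ is sent to the operator $b=UaU^*$ whose matrix satisfies $b_{\phi(p,i'),\phi(q,j')}=(a_{pq})_{i'j'}$.

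Next I would use this block/row correspondence to match the two dense subalgebras. If each $a_{pq}\in\mathbb B^{(k)}(H)$, then for a fixed row index $i=\phi(p,i')$ of $b$, the non-zero entries $b_{ij}$ arise, for each of the $m$ possible values of $q$, from at most $k$ non-zero entries in row $i'$ of $a_{pq}$; thus row $i$ of $b$ has at most $mk$ non-zero entries, and the same holds for columns, so $b\in\mathbb B^{(mk)}(H)\subset\mathbb B^{(\infty)}(H)$. Conversely, if $b\in\mathbb B^{(k)}(H)$, each block $a_{pq}$ inherits the row/column bound $k$, so $a_{pq}\in\mathbb B^{(k)}(H)$ and hence $a\in M_m(\mathbb B^{(\infty)}(H))$. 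This shows $\Ad(U)$ restricts to a $*$-algebra isomorphism $M_m(\mathbb B^{(\infty)}(H))\to\mathbb B^{(\infty)}(H)$.

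Finally I would pass to closures. Since $\Ad(U)$ is a $C^*$-isomorphism it is isometric, so it sends the norm closure of $M_m(\mathbb B^{(\infty)}(H))$ onto $\mathbb B_f(H)$. It remains to identify this closure with $M_m(\mathbb B_f(H))$, which follows from the elementary fact that on $M_m$ of any $C^*$-algebra the norm is equivalent to the maximum of the entry norms, so a sequence converges in $M_m(\mathbb B(H))$ iff each entry converges in $\mathbb B(H)$; thus $\overline{M_m(\mathbb B^{(\infty)}(H))}=M_m(\overline{\mathbb B^{(\infty)}(H)})=M_m(\mathbb B_f(H))$. The main (minor) obstacle is just bookkeeping: making sure the chosen $\phi$ identifies the algebras correctly and that the factor $m$ in the row/column bound is absorbed by passing to $\mathbb B^{(\infty)}(H)$.
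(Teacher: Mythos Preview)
Your argument is correct and is precisely the routine verification the paper has in mind: the paper's proof consists of the single word ``Obvious.'' Your choice of a basis-permuting unitary $U$, the entrywise bookkeeping showing $\Ad(U)$ carries $M_m(\mathbb B^{(\infty)}(H))$ onto $\mathbb B^{(\infty)}(H)$, and the passage to closures via equivalence of the matrix norm with the max-entry norm together spell out exactly this obvious isomorphism.
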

\begin{proof}
Obvious.

\end{proof}

Let $\mathbb K(H)$ denote the $C^*$-algebra of compact operators on $H$.

\begin{lem}\label{compact}
$\mathbb K(H)\subset\mathbb B_f(H)$.

\end{lem}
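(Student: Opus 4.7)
The plan is to use the standard truncation-by-basis-projections argument: compact operators are precisely those for which the finite top-left corners of their matrix converge in norm to the operator.

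Concretely, I would let $P_n$ denote the orthogonal projection onto the span of $e_1,\ldots,e_n$, which is a finite-rank (hence bounded) operator, and consider, for any $k\in\mathbb K(H)$, the sequence $P_nkP_n$. The matrix of $P_nkP_n$ with respect to the fixed basis $\{e_m\}$ is supported in the top-left $n\times n$ block, so every line and every column of this matrix contains at most $n$ non-zero entries. Thus $P_nkP_n\in\mathbb B^{(n)}(H)\subset\mathbb B^{(\infty)}(H)$ for every $n$.

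It remains to argue that $P_nkP_n\to k$ in operator norm. Since $P_n\to 1$ strongly and $k$ is compact, $kP_n\to k$ and $P_nk\to k$ in norm (the standard fact that a strongly convergent net of uniformly bounded operators converges uniformly on relatively compact sets, applied to the image of the unit ball under $k$, respectively under $k^*$). Combining these, $P_nkP_n\to k$ in norm, so $k\in\overline{\mathbb B^{(\infty)}(H)}=\mathbb B_f(H)$.

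There is essentially no obstacle here; the only subtle point is passing from strong convergence $P_n\to 1$ to norm convergence $P_nkP_n\to k$, and that is taken care of by compactness of $k$ in the usual way. The inclusion then follows immediately.
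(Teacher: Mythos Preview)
Your argument is correct and is essentially the paper's one-line proof (``any compact operator can be approximated by operators with only finitely many non-zero entries'') spelled out in detail; you have simply made the approximating sequence explicit via the corner truncations $P_nkP_n$.
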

\begin{proof}
Any compact operator can be approximated by operators with only finite number of non-zero entries.

\end{proof}

\section{Difference from $\mathbb B(H)$}

The following examples show that the $C^*$-algebra $\mathbb B_f(H)$ is smaller than $\mathbb B(H)$, but not too much smaller.

\begin{prop}\label{isometry}
Define the vectors $a_n\in H$, $n\in\mathbb N$, by 
$$a_1=(1,0,0,\ldots);\ a_2=(0,\frac{1}{\sqrt{2}},\frac{1}{\sqrt{2}},0,0,\ldots);\ a_3=(0,0,0,\frac{1}{\sqrt{3}},\frac{1}{\sqrt{3}},\frac{1}{\sqrt{3}},0,0,\ldots);\ \ldots
$$ 
Define an isometry $v$ by $v(e_n)=a_n$. Then $v\notin\mathbb B_f(H)$. 

\end{prop}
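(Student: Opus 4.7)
The plan is to show that $v$ stays at distance at least $1$ from every $\mathbb{B}^{(k)}(H)$, from which it follows immediately that $v$ is not in the norm closure $\mathbb{B}_f(H)$.

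First I would unpack the matrix of $v$: since $v(e_n)=a_n$, the $n$-th column of the matrix of $v$ has exactly $n$ non-zero entries, each equal to $1/\sqrt{n}$. This already rules out $v \in \mathbb{B}^{(k)}_C(H)$ for any fixed $k$, but of course the real statement requires ruling out norm approximations, not just membership.

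Next, fix $k \in \mathbb{N}$ and an arbitrary $b \in \mathbb{B}^{(k)}(H)$. The column constraint for $b$ means that $b(e_n)$ is supported on at most $k$ basis vectors; let $P_n$ denote the orthogonal projection onto $\supp(b(e_n))$, so $(1-P_n)b(e_n)=0$. Evaluating at $e_n$ and projecting by $1-P_n$ then gives
\[
\|v-b\|^2 \;\ge\; \|a_n - b(e_n)\|^2 \;\ge\; \|(1-P_n)a_n\|^2 \;=\; \|a_n\|^2 - \|P_n a_n\|^2.
\]
Since each non-zero coordinate of $a_n$ equals $1/\sqrt{n}$ and $P_n$ has rank at most $k$, at most $k$ of the $n$ unit-weight coordinates of $a_n$ survive $P_n$, so $\|P_n a_n\|^2 \le k/n$ and therefore $\|v-b\| \ge \sqrt{1-k/n}$. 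Letting $n\to\infty$ yields $\|v-b\| \ge 1$. As this holds for every $b \in \mathbb{B}^{(k)}(H)$ and every $k$, the distance from $v$ to $\mathbb{B}^{(\infty)}(H)$ is at least $1$, hence $v \notin \mathbb{B}_f(H)$.

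There is no real obstacle here; the only computation with any content is the lower bound $\|a_n - b(e_n)\| \ge \sqrt{1-k/n}$, and it exploits exactly the ``flatness'' of $a_n$ as a unit vector with $n$ equal coordinates, which is evidently the reason the example was engineered this way. The moral is that a $k$-sparse vector can capture at most a $k/n$ fraction of the $\ell^2$-mass of $a_n$, so as $n$ grows the columns of $v$ become asymptotically orthogonal to every $k$-sparse approximation.
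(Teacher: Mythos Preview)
Your proof is correct and follows essentially the same approach as the paper: both evaluate at $e_n$, use that $b(e_n)$ has at most $k$ non-zero coordinates, and conclude $\|a_n - b(e_n)\|^2 \ge (n-k)/n$. The only cosmetic difference is that you phrase the key inequality via the coordinate projection $P_n$ and pass to the limit to obtain the sharp bound $\operatorname{dist}(v,\mathbb B^{(\infty)}(H))\ge 1$, whereas the paper states the inequality directly and derives a contradiction with $\varepsilon<\tfrac12$.
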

\begin{proof}
If $v\in\mathbb B_f(H)$ then for any $\varepsilon>0$ there exists $k\in\mathbb N$ and $b\in\mathbb B_f^{(k)}(H)$ such that $\|v-b\|<\varepsilon$. In particular, $\|(v-b)e_n\|<\varepsilon$ for any $n\in\mathbb N$. The vector $be_n$ has no more than $k$ non-zero coordinates, hence $\|a_n-be_n\|^2\geq \frac{n-k}{n}$. Taking $\varepsilon<\frac{1}{2}$ and $n>2k$, we get a contradiction.

\end{proof}

The same argument can be used in the next example:

\begin{prop}\label{unitary}
Let $L\subset H$ be the closed subspace spanned by $a_1,a_2,\ldots$, and let $b_1,b_2,\ldots$ be an orthonormal basis for $L^\perp$. Set $u(e_{2n-1})=a_n$, $u(e_{2n})=b_n$. Then $u$ is a unitary, and $u\notin\mathbb B_f(H)$.

\end{prop}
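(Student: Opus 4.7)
The plan is to reduce the statement to Proposition \ref{isometry} by observing that $u$ restricted to the odd-indexed basis vectors is precisely the isometry $v$ from that proposition, and then to verify unitarity separately.

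First I would check that $u$ is well-defined and unitary. Since each $a_n$ is supported on a block of coordinates disjoint from the supports of the other $a_m$, the sequence $\{a_n\}$ is orthonormal, and by construction it is total in $L$. The vectors $\{b_n\}$ form an orthonormal basis of $L^\perp$. Hence the interleaved sequence $a_1,b_1,a_2,b_2,\ldots$ is an orthonormal basis of $H = L\oplus L^\perp$. The operator $u$ sends the orthonormal basis $\{e_n\}$ to this orthonormal basis, so it extends to a unitary operator on $H$.

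Next I would show $u\notin\mathbb B_f(H)$ by repeating the argument of Proposition \ref{isometry} on the odd-indexed basis vectors. Suppose for contradiction that $u\in\mathbb B_f(H)$. Fix $\varepsilon<1/\sqrt 2$. Then there exist $k\in\mathbb N$ and $b\in\mathbb B^{(k)}(H)$ with $\|u-b\|<\varepsilon$. In particular, for every $n$,
\[
\|a_n - be_{2n-1}\| = \|(u-b)e_{2n-1}\| < \varepsilon.
\]
Since $b\in\mathbb B^{(k)}_C(H)$, the column $be_{2n-1}$ has at most $k$ non-zero coordinates, while $a_n$ has the value $1/\sqrt n$ on exactly $n$ coordinates. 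Projecting onto the $n$-dimensional support of $a_n$ shows that at most $k$ of those coordinates can be matched by $be_{2n-1}$, so $\|a_n - be_{2n-1}\|^2\geq (n-k)/n$. Taking $n>2k$ gives $\|a_n-be_{2n-1}\|\geq 1/\sqrt 2$, which contradicts the inequality above.

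No step here presents a real obstacle; the only thing to be careful about is that one must work with $e_{2n-1}$ (not $e_{2n}$) so that the target vector is precisely $a_n$ and the counting argument from Proposition \ref{isometry} applies verbatim.
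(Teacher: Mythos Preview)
Your proof is correct and follows exactly the approach the paper indicates: the paper simply remarks that ``the same argument'' as in Proposition~\ref{isometry} applies, and you carry this out by applying the column-counting estimate to $ue_{2n-1}=a_n$ after verifying unitarity. The only cosmetic difference is your choice of $\varepsilon<1/\sqrt{2}$ versus the paper's $\varepsilon<1/2$, which is immaterial.
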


\begin{prop}\label{projection}
Let $H=\oplus_{n\in\mathbb N}\mathbb C^n$ with the basis consisting of the standard bases of $\mathbb C^n$, $n\in\mathbb N$. Let $p_n=\left(\begin{smallmatrix}\frac{1}{n}&\cdots&\frac{1}{n}\\\vdots&\ddots&\vdots\\\frac{1}{n}&\cdots&\frac{1}{n}\end{smallmatrix}\right)$ denote the projection onto the line $(x,\ldots,x)$, $x\in\mathbb C$, in $\mathbb C^n$. Set $p=\oplus_{n\in\mathbb N}p_n$. Then $p\in\mathbb B_f(H)$. Note that we have $p=vv^*$, where $v\in\mathbb B(H)\setminus\mathbb B_f(H)$ is the isometry from Proposition \ref{isometry}.

\end{prop}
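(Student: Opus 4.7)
The plan is to find, for each $\varepsilon>0$, an operator $q\in\mathbb B^{(k)}(H)$ with $\|p-q\|<\varepsilon$, thereby placing $p$ in the norm closure $\mathbb B_f(H)$. I look for $q$ in block-diagonal form $q=\oplus_n q_n$ with respect to $H=\oplus_n\mathbb C^n$, where each $q_n\in M_n(\mathbb C)$ has at most $k$ non-zero entries in every row and column. For $n\le k$ one can simply take $q_n=p_n$, since $p_n$ already has $n\le k$ non-zero entries per row. The crux of the argument is to construct, for every $n>k$, a $k$-sparse approximant $q_n$ of $p_n$ with $\|p_n-q_n\|<\varepsilon$ uniformly in $n$.

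I take $q_n=(1/k)A_n$, where $A_n$ is the adjacency matrix of a $k$-regular graph $G_n$ on $n$ vertices. Then $q_n$ is a symmetric doubly-stochastic matrix whose rows and columns each contain exactly $k$ non-zero entries equal to $1/k$. The all-ones vector $\mathbf{1}_n$ satisfies $q_n\mathbf{1}_n=\mathbf{1}_n$, and $p_n$ is the orthogonal projection onto the line through $\mathbf{1}_n$; so both $p_n$ and $q_n$ preserve the decomposition of $\mathbb C^n$ into this line and its orthogonal complement $\mathbf{1}_n^{\perp}$, they agree on $\mathbf{1}_n$, and $p_n$ vanishes on $\mathbf{1}_n^{\perp}$. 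Therefore $\|p_n-q_n\|$ equals the spectral radius of $q_n$ on $\mathbf{1}_n^{\perp}$, namely $\mu(G_n)/k$, where $\mu(G_n)$ denotes the second-largest eigenvalue of $A_n$ in absolute value.

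To obtain $\mu(G_n)/k<\varepsilon$ uniformly in $n$, I need $G_n$ to be an expander: choosing $G_n$ to be a Ramanujan (or near-Ramanujan) graph gives $\mu(G_n)\le 2\sqrt{k-1}$, so $\|p_n-q_n\|\le 2/\sqrt{k-1}$, which is smaller than any prescribed $\varepsilon$ once $k$ is large enough. With such graphs in hand for every $n>k$, the operator $q=\oplus_n q_n$ lies in $\mathbb B^{(k)}(H)$ and satisfies $\|p-q\|=\sup_n\|p_n-q_n\|<\varepsilon$, as required.

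The delicate step, and the main obstacle of the proof, is the uniform existence of $k$-regular graphs $G_n$ with $\mu(G_n)$ of order $\sqrt{k}$ for every sufficiently large $n$. This combinatorial input comes from the theory of expander graphs --- either via explicit Ramanujan constructions (Lubotzky--Phillips--Sarnak, Marcus--Spielman--Srivastava) or probabilistically via Friedman's theorem for random regular graphs. The finitely many ``small'' $n>k$ for which a suitable graph is not available can be absorbed by slightly increasing $k$ and setting $q_n=p_n$ for those $n$.
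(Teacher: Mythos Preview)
Your approach is correct in outline and shares the essential ingredient with the paper---expander graphs---but the execution is different. The paper fixes a single degree $d\ge 5$, takes a family of $\tfrac12$-expanders on $2n$ vertices (whose existence is the elementary Proposition~1.2.1 in Lubotzky), forms the block-diagonal Laplacian $\mathcal L=\oplus_n\mathcal L_n\in\mathbb B^{(d+1)}(H)$, and approximates $p'=\oplus_n p_{2n}$ by \emph{polynomials} $f_s(\mathcal L)$, using only the one-sided spectral gap $\Sp(\mathcal L)\subset\{0\}\cup[\delta,2]$; the odd block sizes are then handled by a compact perturbation. Your route instead lets the degree $k$ grow and approximates $p_n$ directly by $\tfrac1kA_n$. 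This is conceptually cleaner (no functional calculus, no separate treatment of odd $n$), but it demands a \emph{two-sided} spectral bound $\mu(G_n)=\max_{i\ge 2}|\lambda_i(A_n)|=O(\sqrt{k})$, i.e.\ non-bipartite near-Ramanujan graphs of arbitrarily large degree on every vertex count---a strictly stronger combinatorial input than the paper uses.

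Two points in your last paragraph need tightening. First, the Marcus--Spielman--Srivastava construction yields only \emph{bipartite} Ramanujan graphs; for these $-k$ is always an eigenvalue, so $\mu(G_n)=k$ and $\|p_n-q_n\|=1$, which is useless for your purpose. Likewise, Lubotzky--Phillips--Sarnak graphs exist only for special $(k,n)$. Second, a $k$-regular simple graph on $n$ vertices requires $kn$ even, so for odd $k$ the set of ``unavailable'' $n$ is infinite, not finite. Both issues are easily repaired---take $k$ even and appeal to Friedman's theorem (a random $k$-regular graph on $n$ vertices is a.a.s.\ non-bipartite with $\mu\le 2\sqrt{k-1}+\epsilon$, so such graphs exist for all large $n$), then absorb the finitely many remaining small $n$ by setting $q_n=p_n$ and enlarging $k$---but these fixes should be made explicit.
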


\begin{proof}
The proof uses expander graphs \cite{Lubotzky} and essentially is contained in \cite{Higson-Lafforgue-Skandalis}, where it is shown that the direct sum of one-dimensional projections $\oplus_{n\in\mathbb N}p_{m_n}$ lies in the uniform Roe algebra (hence in $\mathbb B_f(H)$, cf. Lemma \ref{Roe} below) for certain increasing sequences $\{m_n\}$ of sizes. We only add a remark that one may take $m_n=n$. 

Let $X=(V,E)$ be a $d$-regular graph, without loops and multiple edges, with the set $V$ of vertices and the set $E$ of edges. Let $|V|=m$. The $m$-dimensional Hilbert space $l^2(V)$ is endowed with the standard basis consisting of characteristic functions of the vertices. The Laplacian on $l^2(V)$ is the positive operator $\mathcal L$ given by the matrix $(L_{v,w})_{v,w\in V}$, where 
$$
L_{v,w}=\left\lbrace\begin{array}{rl}1,&\mbox{if\ }v=w;\\-\frac{1}{d},&\mbox{if\ }v,w\mbox{\ are\ adjacent;}\\0,&\mbox{otherwise}.\end{array}\right.
$$ 
It is known that $\|\mathcal L\|\leq 2$ for any $m\in\mathbb N$. It is also easy to see that the matrix $L$ has no more than $d+1$ non-zero entries in each line and in each column for any $m\in\mathbb N$.

The smallest eigenvalue $\lambda_0$ of $\mathcal L$ is zero, and the corresponding eigenfunctions are constants, so the projection onto the kernel of $\mathcal L$ is the spectral projection onto this eigenspace, and its matrix equals $p_m$.

If $d\geq 5$ then for each $n\in\mathbb N$ there exists a $\frac{1}{2}$-expander graph $X_n=(V_n,E_n)$ with $|V_n|=2n$ (\cite{Lubotzky}, Proposition 1.2.1), and it follows (\cite{Lubotzky}, Section 4.2) that the second smallest eigenvalue $\lambda_1$ of each Laplacian $\mathcal L_n$ of $X_n$ satisfies $\lambda_1\geq\delta$ for some $\delta>0$. 

Let $\{f_s\}_{s\in\mathbb N}$ be a sequence of polynomials such that 
\begin{itemize}
\item
$f_s(0)=1$ for any $s\in\mathbb N$;
\item
$|f_s(t)|\leq\frac{1}{s}$ for any $t\in[\delta,2]$.
\end{itemize} 

Set $H=\oplus_{n\in\mathbb N}l^2(V_n)$ with the basis obtained by uniting the bases of each $l^2(V_n)$, and let $\mathcal L=\oplus_{n\in\mathbb N}\mathcal L_n$.

Note that each $\mathcal L_n$ has no more than $d+1$ non-zero entries in each line and in each column. By Lemma \ref{sum} and Lemma \ref{product}, we conclude that for any $s\in\mathbb N$ there exists $k=k(s)\in\mathbb N$ such that each line and each column of $f_s(\mathcal L_n)$ contains no more than $k$ non-zero entries.
Therefore, $f_s(\mathcal L)=\oplus_{n\in\mathbb N}\mathcal L_n\in\mathbb B^{(k)}(H)$. As the spectrum of $\mathcal L$ lies in $\{0\}\cup[\delta,2]$, the operators $f_s(\mathcal L)$ converge to the spectral projection $p'=\oplus_{n\in\mathbb N}p_{2n}$ onto the kernel of $\mathcal L$: 
$$
\lim_{s\to\infty}\|f_s(\mathcal L)-p'\|=0,
$$ 
hence $p'\in\mathbb B_f(H)$. 

Similarly, $p''=p_1\oplus(\oplus_{n\in\mathbb N}(p_{2n}\oplus p_{2n}\oplus 0))\in\mathbb B_f(H)$. Note that 
$$
\left\|p_{2n+1}-\left(\begin{matrix}p_{2n}&0\\0&0\end{matrix}\right)\right\|=\left\|\frac{1}{2n+1}\left(\begin{matrix}-p_{2n}&l\\ l^* &1\end{matrix}\right)\right\|\leq \frac{1}{2n+1}(\|p_{2n}\|+2\|l\|+1)=\frac{2+2\sqrt{2n}}{2n+1},
$$
where $l$ is the column of $2n$ units. So, $p-p''\in\mathbb K(H)$, and, by Lemma \ref{compact}, $p\in\mathbb B_f(H)$.

\end{proof}

Note that, by spectral theorem, any selfadjoint $a\in\mathbb B(H)$ can be approximated by linear combinations of projections, so if all projections in $H$ would lie in $\mathbb B_f(H)$ then $a\in\mathbb B_f(H)$, hence any $b\in\mathbb B(H)$ must lie in $\mathbb B_f(H)$, which is false. Therefore, there exists a projection in $\mathbb B(H)\setminus\mathbb B_f(H)$. Here is an explicit example.

\begin{prop}
Let $u$ be the unitary from Proposition \ref{unitary} on a Hilbert space $H$, and let $p=\frac{1}{2}\left(\begin{matrix}1&u\\u^*&1\end{matrix}\right)$ be a projection in $H\oplus H$ (with the basis obtained by uniting bases of the two copies of $H$). Then
$p\notin\mathbb B_f(H\oplus H)$.

\end{prop}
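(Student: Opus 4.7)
The plan is to argue by contradiction, reducing the question to Proposition \ref{unitary}. Suppose, for contradiction, that $p\in\mathbb{B}_f(H\oplus H)$. Since the identity on $H\oplus H$ lies in $\mathbb B^{(1)}(H\oplus H)\subset\mathbb B_f(H\oplus H)$, the operator
\[
2p-I=\begin{pmatrix}0&u\\u^*&0\end{pmatrix}
\]
also lies in $\mathbb B_f(H\oplus H)$. It therefore suffices to extract the off-diagonal block $u$ as an element of $\mathbb B_f(H)$, contradicting Proposition \ref{unitary}.

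Concretely, let $P_1,P_2\in\mathbb B^{(1)}(H\oplus H)$ be the diagonal projections onto the first and second copies of $H$; these lie in $\mathbb B_f(H\oplus H)$. Then
\[
P_1(2p-I)P_2=\begin{pmatrix}0&u\\0&0\end{pmatrix}
\]
is an element of $\mathbb B_f(H\oplus H)$. Using the orthonormal basis of $H\oplus H$ obtained by uniting the bases of the two copies of $H$, as in Lemma \ref{matrix}, the operator on the left has a matrix whose non-zero entries sit exactly in the positions corresponding to the matrix of $u$ on $H$. In particular, its matrix has no more than $k$ non-zero entries per row (resp.\ column) if and only if the matrix of $u$ does; and norm approximation of this operator by elements of $\mathbb B^{(k)}(H\oplus H)$ yields, block by block, a norm approximation of $u$ by elements of $\mathbb B^{(k)}(H)$.

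Thus $u\in\mathbb B_f(H)$, which contradicts Proposition \ref{unitary}. Hence $p\notin\mathbb B_f(H\oplus H)$. The only step that requires a moment's care is the last one — namely, that passing from a $2\times 2$ block-matrix form on $H\oplus H$ to its individual entries preserves membership in $\mathbb B_f$ — but this is purely a bookkeeping observation about the unification of bases and the stability of the line/column count under taking submatrices, and so poses no real obstacle.
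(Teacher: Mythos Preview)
Your argument is correct and follows the same route as the paper, which simply says the claim follows from Lemma \ref{matrix} together with $u\notin\mathbb B_f(H)$. You have merely unpacked that one-line proof: the isomorphism $\mathbb B_f(H\oplus H)\cong M_2(\mathbb B_f(H))$ of Lemma \ref{matrix} is precisely what guarantees that extracting the $(1,2)$-block of $p$ (or of $2p-I$) preserves membership in $\mathbb B_f$, so your compression by $P_1,P_2$ and your final ``bookkeeping'' step are exactly the content of that lemma.
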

\begin{proof}
This follows from Lemma \ref{matrix} and from $u\notin\mathbb B_f(H)$.

\end{proof}

\section{Ideals}

It is known that $\mathbb K(H)$ is the unique closed two-sided ideal in $\mathbb B(H)$. This is not true for $\mathbb B_f(H)$.

Let $a\in\mathbb B_f(H)$, $(a_{ij})_{i,j\in\mathbb N}$ the matrix of $a$ (recall that the basis is fixed). Set 
$$
\mathbb I(H)=\{a\in\mathbb B_f(H):\lim_{n\to\infty}\sup_{i,j\geq n}|a_{ij}|=0\}. 
$$
Obviously, $\mathbb K(H)\subset\mathbb I(H)$. It is easy to see that the projection $p$ from Proposition \ref{projection} lies in $\mathbb I(H)$ and is not compact, hence $\mathbb I(H)$ is strictly bigger than $\mathbb K(H)$.

A similar ideal in uniform Roe algebras was studied, e.g. in \cite{Roe-Willett}, under the name of the ideal of ghost operators.

\begin{thm}
$\mathbb I(H)$ is the maximal closed two-sided proper ideal in $\mathbb B_f(H)$.

\end{thm}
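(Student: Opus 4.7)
The plan is to split the claim into three tasks: (a) verify that $\mathbb I(H)$ is a closed two-sided ideal, (b) observe that it is proper, and (c) show that any closed two-sided ideal strictly larger than $\mathbb I(H)$ must be all of $\mathbb B_f(H)$.

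For (a), closure under the norm and under sums is routine from $|a_{ij}-b_{ij}|\le\|a-b\|$ and an $\varepsilon/2$-argument. For the ideal property, given $a\in\mathbb I(H)$ and $b\in\mathbb B^{(k)}(H)$, I would split $(ab)_{il}=\sum_{j:\,b_{jl}\ne0}a_{ij}b_{jl}$ at an index $N$ chosen so that $|a_{ij}|<\varepsilon$ for $i,j\ge N$: the $j\ge N$ portion is bounded by $k\varepsilon\|b\|$ (the column of $b$ has at most $k$ nonzero entries), while the $j<N$ portion is bounded by $\|a\|\sum_{j<N}|b_{jl}|$, which tends to $0$ as $l\to\infty$ because each row of the bounded operator $b$ is in $\ell^2$. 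Pass to arbitrary $b\in\mathbb B_f(H)$ by norm approximation and closure, and handle $ba$ analogously. Part (b) is immediate since $|1_{ii}|=1$ for all $i$.

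For (c), suppose $J$ is a closed two-sided ideal with $J\not\subseteq\mathbb I(H)$, and pick $a\in J\setminus\mathbb I(H)$. Let $L=\lim_{n\to\infty}\sup_{i,j\ge n}|a_{ij}|>0$ and set $\delta=L/2$; then for every $n$ there exist $i,j\ge n$ with $|a_{ij}|\ge\delta$. Choose $a_0\in\mathbb B^{(k)}(H)$ with $\|a-a_0\|<\delta/4$, and set $S_j=\{i:(a_0)_{ij}\ne0\}$, $T_i=\{j:(a_0)_{ij}\ne0\}$, both of size at most $k$. I would then inductively build a sequence $(i_n,j_n)$ with $i_n,j_n\to\infty$, $|a_{i_n,j_n}|\ge\delta$, $i_n\notin\bigcup_{m<n}S_{j_m}$, and $j_n\notin\bigcup_{m<n}T_{i_m}$. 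This induction succeeds because at stage $n$ the forbidden row/column sets have size $\le(n-1)k$, while each fixed row or column of $a$ contains at most $\|a\|^2/\delta^2$ entries of modulus $\ge\delta$ (by $\ell^2$-summability), so removing finitely many rows and columns still leaves infinitely many pairs $(i,j)$ with $|a_{ij}|\ge\delta$ and $i,j$ arbitrarily large.

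Now I would define isometries $v,w\in\mathbb B^{(1)}(H)$ by $ve_n=e_{j_n}$ and $we_n=z_ne_{i_n}$, with unimodular phases $z_n$ so that $\overline{z_n}(a_0)_{i_n,j_n}=|(a_0)_{i_n,j_n}|$. A direct computation gives $(w^*a_0v)_{mn}=\overline{z_m}(a_0)_{i_m,j_n}$; the disjointness conditions force the off-diagonal entries to vanish, while the diagonal is $|(a_0)_{i_n,j_n}|\ge 3\delta/4$. Hence $b_0:=w^*a_0v$ is diagonal with $b_0\ge(3\delta/4)\cdot 1$, so $\|b_0^{-1}\|\le 4/(3\delta)$. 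The element $b:=w^*av\in J$ satisfies $\|b-b_0\|\le\|a-a_0\|<\delta/4$, so $\|b_0^{-1}(b-b_0)\|<1/3$ and the Neumann series makes $b=b_0(1+b_0^{-1}(b-b_0))$ invertible in $\mathbb B(H)$; since $\mathbb B_f(H)$ is a unital $C^*$-subalgebra, $b^{-1}\in\mathbb B_f(H)$. Therefore $1=bb^{-1}\in J$, so $J=\mathbb B_f(H)$. The most delicate point is the inductive selection of pairs $(i_n,j_n)$ with disjoint $a_0$-supports; once that is in hand, the rest is a short Neumann-series perturbation.
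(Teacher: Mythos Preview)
Your argument is correct. The route, however, differs from the paper's in a useful way, so a brief comparison is in order.

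For part (c) the paper first replaces $a$ by a \emph{selfadjoint} element of $\mathbb J\setminus\mathbb I(H)$ and then splits into two cases, according to whether the diagonal $(a_{ii})$ has a subsequence bounded away from $0$ or tends to $0$. In each case it extracts a single isometry $u\in\mathbb B_f(H)$ onto a subspace $L$ (spanned by basis vectors) so that, after passing to an approximant $a^{(k)}\in\mathbb B^{(k)}(H)$, the compression $p_La^{(k)}|_L$ is diagonal (first case) or $2\times 2$ block-diagonal (second case) and bounded below; a perturbation estimate then makes $u^*p_Lau$ invertible. Your version sidesteps both the selfadjoint reduction and the case split by allowing two distinct partial isometries $v,w\in\mathbb B^{(1)}(H)$ so that $w^*a_0v$ is exactly diagonal; the off-diagonal vanishing is forced by the disjointness conditions $i_n\notin\bigcup_{m<n}S_{j_m}$ and $j_n\notin\bigcup_{m<n}T_{i_m}$, and invertibility of $w^*av$ follows by a clean Neumann-series perturbation. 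This is genuinely shorter and avoids the somewhat delicate $2\times 2$ eigenvalue estimate in the paper's second case. One minor remark: the $\ell^2$-summability count you invoke for the inductive selection is correct but unnecessary, since the forbidden sets $\bigcup_{m<n}S_{j_m}$ and $\bigcup_{m<n}T_{i_m}$ are finite sets of indices, and simply choosing $i_n,j_n$ larger than their maxima already suffices. Also note (implicitly used) that $|(a_0)_{i_m,j_m}|\ge 3\delta/4>0$ forces $i_m\in S_{j_m}$ and $j_m\in T_{i_m}$, so your disjointness conditions automatically give that the $i_n$ (resp.\ $j_n$) are pairwise distinct, which is what makes $v$ and $w$ isometries.
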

\begin{proof}
First, note that $\mathbb I(H)$ is closed. Indeed, suppose that $(a^{(\iota)})_{\iota\in I}$ converges to $a\in\mathbb B_f(H)$, and $a^{(\iota)}\in\mathbb I(H)$ for any $\iota\in I$. Fix $\varepsilon>0$, then there is $\kappa\in I$ such that $\|a^{(\iota)}-a\|<\varepsilon$ for any $\iota\geq\kappa$. Therefore, $|a^{(\iota)}_{ij}-a_{ij}|<\varepsilon$ for any $i,j\in\mathbb N$. Then 
$$
|a_{ij}|\leq|a^{(\kappa)}_{ij}|+|a_{ij}-a^{(\kappa)}_{ij}|<|a^{(\kappa)}_{ij}|+\varepsilon. 
$$
As $a^{(\kappa)}\in\mathbb I(H)$, there exists $N$ such that $|a^{(\kappa)}_{ij}|<\varepsilon$ for any $i,j>N$. Then $|a_{ij}|<2\varepsilon$ for any $i,j>N$.

Now let $a\in\mathbb I(H)$, $b\in\mathbb B^{(k)}(H)$ for some $k$. Let us check that $ab\in\mathbb I(H)$. Due to the involution, this would imply $ba\in\mathbb I(H)$. Set $c_{il}=\sum_{j\in\mathbb N}a_{ij}b_{jl}$. Take $\varepsilon>0$ and find $N$ such that $|a_{ij}|<\varepsilon$ for $i,j>N$. After fixing $N$, we can find $M$ such that $b_{jl}=0$ for any $j\leq N$ and any $l>M$ (see Lemma \ref{product}). Then, for $i>N$ and $l>M$ we have
$$
|c_{il}|=\Bigl|\sum_{j\in\mathbb N}a_{ij}b_{jl}\Bigr|=\Bigl|\sum_{j>N}a_{ij}b_{jl}\Bigr|<\varepsilon k\|b\|
$$ 
(we use here that the sum contains not more than $k$ non-zero summands and that $|b_{jl}|\leq\|b\|$ for all $j,l\in\mathbb N$).
Thus $\mathbb I(H)$ is a two-sided ideal in $\mathbb B_f(H)$. 

It remains to show that $\mathbb I(H)$ is the maximal proper closed two-sided ideal in $\mathbb B_f(H)$. Assume that there exists an ideal $\mathbb J$ and a selfadjoint $a\in\mathbb J\setminus\mathbb I(H)$. We are going to construct a subspace $L\subset H$ and an isometry $u:H\to L$ in two different cases.
First, consider the case when there exists a sequence $\{i_n\}_{n\in\mathbb N}$ such that $\|a_{i_n,i_n}\|>\delta$ for some $\delta>0$. As $\mathbb B^(\infty)(H)$ is dense in $\mathbb B_f(H)$, there exists $k\in\mathbb N$ and a selfadjoint $a^{(k)}\in\mathbb B^{(k)}(H)$ such that $\|a-a^{(k)}\|<\delta/4$. Then $|a^{(k)}_{i_n,i_n}|>3\delta/4$. Let us pass to a subsequence $\{i_{n_m}\}_{m\in\mathbb N}$ inductively: set $n_1=1$, and if $m_{n}$ is already fixed then let $n_{m+1}$ be defined by the condition
\begin{equation}\label{condition}
i_{n_{m+1}}>l \mbox{\ for\  any\ } l\in\mathbb N \mbox{\ such\  that\  at\  least\  one\  of\ } a^{(k)}_{i_{n_1},l},\ldots,a^{(k)}_{i_{n_m},l} \mbox{\ is\  non-zero}
\end{equation}
(for each $n$, there is not more than $k$ non-zero entries $a^{(k)}_{i_n,j}$).
Then let $L\subset H$ be a closed subspace spanned by the vectors $e_{i_{n_m}}$, $m\in\mathbb N$, and let $p_L\in\mathbb B_f(H)$ be the projection onto $L$. 
By (\ref{condition}), $p_La^{(k)}|_L$ is a diagonal operator: 
$$
a^{(k)}_{i_{n_m},i_{n_{m'}}}=0 \mbox{\ if\ } m\neq m',\quad\mbox{and}\quad |a^{(k)}_{i_{n_m},i_{n_m}}|>3\delta/4. 
$$
Then $\|p_La^{(k)}\xi\|>\frac{3\delta}{4}\|\xi\|$ and 
$$
\|p_La\xi\|\geq \|p_La^{(k)}\xi\|-\|p_L(a^{(k)}-a)\xi\|>\frac{3\delta}{4}\|\xi\|-\frac{\delta}{4}\|\xi\|=\frac{\delta}{2}\|\xi\|
$$
for any $\xi\in L$. Let $u:H\to L$ be the isometry defined by $u(e_m)=e_{i_{n_m}}$. Then $u\in\mathbb B_f(H)$. 

Otherwise, assume that $\lim_{i\to\infty}a_{ii}=0$. As $a\notin\mathbb I(H)$, there exists some $\delta>0$ and sequences $\{i_n\},\{j_n\}_{n\in\mathbb N}$ such that $i_{n+1},j_{n+1}>\max(i_n,j_n)$, $j_n\neq i_n$, and $|a_{i_n,j_n}|>\delta$ for any $n\in\mathbb N$. As in the first case, find a selfadjoint $a^{(k)}\in\mathbb B^{(k)}(H)$ such that $\|a-a^{(k)}\|<\delta/6$. Then in the two-by-two matrices 
\begin{equation}\label{matrix1}
\left(\begin{matrix}a^{(k)}_{i_n,i_n}&a^{(k)}_{i_n,j_n}\\a^{(k)}_{j_n,i_n}&a^{(k)}_{j_n,j_n}\end{matrix}\right)
\end{equation}
the diagonal entries satisfy $|a^{(k)}_{i,i}|<\delta/6$ for sufficiently great $n$, while the off-diagonal entries have modulus greater than $5\delta/6$, hence  there exists some $n_0\in\mathbb N$ such that the matrices (\ref{matrix1}) are bounded from below by $2\delta/3$ for $n>n_0$. 

Once again let us pass inductively to a subsequence $\{n_m\}_{m\in\mathbb N}$. Set $n_1\geq n_0$, and if $n_m$ is already fixed then let
$n_{m+1}$ be defined by the condition
$$
\begin{array}{c}
\min(i_{n_{m+1}},j_{n_{m+1}})>l\\ \mbox{\ for\  any\ } l\in\mathbb N 
\mbox{\ such\  that\  at\  least\  one\  of\ } a^{(k)}_{i_{n_1},l},\ldots,a^{(k)}_{i_{n_m},l}, a^{(k)}_{j_{n_1},l},\ldots,a^{(k)}_{j_{n_m},l}\mbox{\ is\  non-zero}.
\end{array}
$$

Let $L$ be a closed subspace spanned by $e_{i_{n_{m}}},e_{j_{n_{m}}}$, $m\in\mathbb N$. Then $p_La^{(k)}$ is a block-diagonal operator with two-dimensional blocks of the form (\ref{matrix1}), hence $\|p_La^{(k)}\xi\|>\frac{2\delta}{3}$ for any $\xi\in L$. Then
$$
\|p_La\xi\|\geq \|p_La^{(k)}\xi\|-\|p_L(a^{(k)}-a)\xi\|>\frac{2\delta}{3}\|\xi\|-\frac{\delta}{6}\|\xi\|=\frac{\delta}{2}\|\xi\|
$$
for any $\xi\in L$. Similarly to the first case, let $u:H\to L$ be the isometry defined by $u(e_{2m-1})=e_{i_{n_m}}$, $u(e_{2m})=e_{j_{n_m}}$. Again we have $u\in\mathbb B_f(H)$. 

The further argument is standard \cite{Calkin}. In both cases, for any $\xi\in H$ we have $\|u^*p_Lau\xi\|>\frac{\delta}{2}\|\xi\|$ for any $\xi\in H$, hence $u^*p_Lau$ is invertible, therefore, $1\in\mathbb J$ and $\mathbb J=\mathbb B_f(H)$.

\end{proof}

\section{Similar $C^*$-algebras}

We know three $C^*$-algebras, which seem similar to $\mathbb B_f(H)$. We don't know if they all are the same or different, but it is easy to see that $\mathbb B_f(H)$ lies in all of them.

In \cite{Tanbay} the following $C^*$-subalgebra of $\mathbb B(H)$ was defined. Let $a\in\mathbb B(H)$, $(a_{ij})_{i,j\in\mathbb N}$ its matrix. The operator $a$ is called $l_1$-bounded if there is $m\in(0,\infty)$ such that $\sum_{i=1}^\infty|a_{ij}|<m$ for any $j\in\mathbb N$ and $\sum_{j=1}^\infty|a_{ij}|<m$ for any $i\in\mathbb N$. It is shown in \cite{Tanbay} that $l_1$-bounded operators form a $*$-algebra, and its norm-closure is a $C^*$-algebra. Let us denote it by $\mathbb B_T(H)$.

\begin{lem}
$\mathbb B_F(H)\subset\mathbb B_T(H)$.

\end{lem}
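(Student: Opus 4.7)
The plan is to reduce the claim to the dense subalgebra $\mathbb{B}^{(\infty)}(H)$ and then observe that every element there is manifestly $l_1$-bounded. Since $\mathbb{B}_T(H)$ is norm-closed by definition, and $\mathbb{B}_f(H)$ is the norm closure of $\mathbb{B}^{(\infty)}(H)$, it suffices to establish the inclusion $\mathbb{B}^{(\infty)}(H)\subset\mathbb{B}_T(H)$.

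To that end, fix $a\in\mathbb{B}^{(k)}(H)$ with matrix $(a_{ij})$. The key inputs are: (i) each row and each column of the matrix has at most $k$ nonzero entries, by definition of $\mathbb{B}^{(k)}(H)$; and (ii) each entry satisfies the standard uniform bound $|a_{ij}|=|\langle ae_j,e_i\rangle|\leq\|a\|$. Combining these, for every $j\in\mathbb{N}$ the column sum
\[
\sum_{i=1}^\infty|a_{ij}|\leq k\|a\|,
\]
and similarly every row sum is bounded by $k\|a\|$. Choosing $m=k\|a\|+1$ yields strict inequalities as required in the definition, so $a$ is $l_1$-bounded and hence lies in $\mathbb{B}_T(H)$.

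This gives $\mathbb{B}^{(\infty)}(H)\subset\mathbb{B}_T(H)$, and taking norm closures delivers the desired inclusion $\mathbb{B}_f(H)\subset\mathbb{B}_T(H)$. There is no real obstacle here; the only thing one has to be careful about is that the constant $m$ depends on both $k$ and $\|a\|$, which is fine since the $l_1$-boundedness condition is required only element-by-element, not uniformly over an algebra. The argument does \emph{not} pass quantitatively through the closure (the constant $m$ may blow up along a Cauchy sequence), which is consistent with the fact that the inclusion need not be an equality.
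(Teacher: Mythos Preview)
Your proof is correct and follows essentially the same approach as the paper: both show the key inequality $\sum_j |a_{ij}|\leq k\|a\|$ for $a\in\mathbb{B}^{(k)}(H)$ and then pass to the closure. The paper's proof is extremely terse (one line), leaving implicit the closure argument and the symmetric bound on columns that you spell out explicitly.
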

\begin{proof}
Let $a\in\mathbb B^{(k)}(H)$. Then $\sum_{j=1}^\infty|a_{ij}|\leq k\|a\|$.

\end{proof} 

Another $C^*$-subalgebra of $\mathbb B(H)$ can be defined as follows.

Let $\omega$ be a free ultrafilter on $\mathbb N$. Set 
$$
\mathcal H=\{\xi=(\xi_n)_{n\in\mathbb N}:\sup_n\|\xi_n\|<\infty\},
$$
$$
\langle\xi,\eta\rangle=\lim_\omega \langle\xi_n,\eta_n\rangle,\quad \xi,\eta\in\mathcal H,
$$ 
$$
\mathcal H_0=\{\xi\in\mathcal H:\langle\xi,\xi\rangle=0\}, \quad H^\omega=\mathcal H/\mathcal H_0. 
$$
Then $H^\omega$ is a (non-separable) Hilbert space, and $\mathbb B(H)$ is obviously represented on $H^\omega$ by 
$$
\pi(a)\xi=(a\xi_n)_{n\in\mathbb N}, \quad \pi:\mathbb B(H)\to\mathbb B(H^\omega). 
$$
As $H^\omega$ contains a copy of $H$ consisting of constant sequences, $\pi$ is injective.

For $k\in\mathbb N$ let $\mathcal H^{(k)}$ denote the set of all sequences $\xi\in\mathcal H$ such that not more than $k$ entries are non-zero. Although each $\mathcal H^{(k)}$ is not a linear subspace, their union 
$$
\mathcal H^{(\infty)}=\cup_{k\in\mathbb N}\mathcal H^{(k)}
$$ 
is. Then $\mathcal H^{(\infty)}/\mathcal H_0$ is a linear subspace of $H^\omega$. Denote its closure by $H^\omega_f$. It is a proper subspace of $H^\omega$. 

\begin{lem}
Let $a\in\mathbb B(H)$. If $a\in\mathbb B_f(H)$ then $H^\omega_f$ is invariant for $\pi(a)$.

\end{lem}
\begin{proof}
Let $a\in\mathbb B^{(k)}(H)$, $\xi=(\xi_n)_{n\in\mathbb N}\in\mathcal H^{(l)}$. Then $a\xi_n$ has not more than $kl$ non-zero coordinates, hence $(a\xi_n)_{n\in\mathbb N}\in\mathcal H^{(kl)}$, hence $\pi(a)$ maps $\mathcal H^{(k)}+\mathcal H_0$ to $\mathcal H^{(kl)}+\mathcal H_0$, thus leaves $\mathcal H^{(\infty)}/\mathcal H_0$ invariant. 

\end{proof}

Set $\mathbb B_\omega(H)=\{a\in\mathbb B(H):\pi(a)(H^\omega_f)\subset H^\omega_f\}$. It is a $C^*$-algebra, and $\mathbb B_f(H)\subset\mathbb B_\omega(H)\subset\mathbb B(H)$.

One more $C^*$-algebra is the multiplier $C^*$-algebra $M(\mathbb I(H))$ of the ideal $\mathbb I(H)$. As $\mathbb K(H)\subset\mathbb I(H)\subset\mathbb B(H)$, we have $M(\mathbb I(H))\subset\mathbb B(H)$. 

It is easy to see that the isometry from Proposition \ref{isometry} lies in neither of the three $C*$-algebras.

\begin{problem}
Are the $C^*$-algebras $\mathbb B_f(H)$, $\mathbb B_T(H)$, $\mathbb B_\omega(H)$, $M(\mathbb I(H))$ different?

\end{problem}

\section{Embeddings}

We do not know if any separable $C^*$-algebra $A$ admits an injective $*$-homomorphism into $\mathbb B_f(H)$, but here are four  easy cases when the answer is positive. 

\begin{lem}\label{lem-action}
Let a discrete group $\Gamma$ act on a countable discrete space $X$. Then  the reduced crossed product $C_b(X)\rtimes_r\Gamma$ by the algebra $C_b(X)$ of bounded functions on $\Gamma$ admits an injective $*$-homomorphism into $\mathbb B_f(H)$. In particular, the reduced group $C^*$-algebra $C^*_r(\Gamma)$ admits an injective $*$-homomorphism into $\mathbb B_f(H)$. 

\end{lem}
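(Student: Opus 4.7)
The plan is to use the standard reduced covariant representation of $C_b(X)\rtimes_r\Gamma$ on $H':=\ell^2(\Gamma\times X)$ and to verify that, with respect to the canonical product basis $\{e_{g,x}\}_{g\in\Gamma,\,x\in X}$, the generators of the crossed product act by matrices with at most one non-zero entry per row and per column. Assuming $\Gamma$ is countable (so that $H'$ is separable and may be identified with the fixed $H$), the regular covariant pair is
\eQ{\pi(f)\,e_{g,x}=f(g^{-1}\!\cdot x)\,e_{g,x},\qquad \lambda_h\,e_{g,x}=e_{hg,x},}
for $f\in C_b(X)$ and $h\in\Gamma$. Thus $\pi(f)$ is diagonal with entries bounded by $\|f\|_\infty$, and $\lambda_h$ is a permutation of basis vectors; in particular both operators lie in $\mathbb B^{(1)}(H')$.

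The next step is to invoke Lemmas \ref{sum} and \ref{product} to conclude that the $*$-algebra generated by $\{\pi(f),\lambda_h:f\in C_b(X),\ h\in\Gamma\}$, which is the algebraic crossed product $C_c(\Gamma,C_b(X))$ sitting densely inside $C_b(X)\rtimes_r\Gamma$, maps into $\mathbb B^{(\infty)}(H')$. Taking the norm closure on both sides, the image of the whole reduced crossed product lies in $\mathbb B_f(H')\cong\mathbb B_f(H)$. Faithfulness of the resulting $*$-homomorphism is built into the definition of $C_b(X)\rtimes_r\Gamma$: the regular representation used in the construction is, by definition, isometric on the reduced crossed product.

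For the final assertion about $C_r^*(\Gamma)$, I would specialize to $X=\{*\}$ with the trivial action, for which $C_b(X)=\mathbb C$ and $C_b(X)\rtimes_r\Gamma=C_r^*(\Gamma)$; the construction above then collapses to the left regular representation of $\Gamma$ on $\ell^2(\Gamma)$, which acts by basis permutations and hence lands in $\mathbb B^{(1)}(\ell^2(\Gamma))\subset\mathbb B_f(H)$.

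Honestly, no step in this argument is a real obstacle: once the product basis on $\ell^2(\Gamma\times X)$ is chosen, the lemma reduces to the one-line observation that diagonal and permutation matrices each have at most one non-zero entry per row and column. The only conceptual point worth naming is the choice of representation space — using $\ell^2(\Gamma\times X)$ with the product basis is what makes \emph{both} the $C_b(X)$-action and the $\Gamma$-action simultaneously transparent; any less natural identification with $H$ would obscure the sparsity of one or the other family of generators.
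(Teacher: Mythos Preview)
Your argument is correct and rests on the same elementary observation as the paper's: group elements act by basis permutations and functions act diagonally, so generators lie in $\mathbb B^{(1)}$. The one genuine difference is the choice of representation space. The paper works directly on $\ell^2(X)$ via the covariant pair $(\pi(f)\delta_x=f(x)\delta_x,\ U_g\delta_x=\delta_{gx})$, whereas you use the regular representation on $\ell^2(\Gamma\times X)$. Your choice buys you injectivity for free, since faithfulness of the regular representation is the definition of the reduced crossed product; by contrast, the covariant representation on $\ell^2(X)$ used in the paper is not faithful for an arbitrary action (take $\Gamma$ acting trivially on a point). For the special case $C^*_r(\Gamma)$, the paper reaches $\ell^2(\Gamma)$ by taking $X=\Gamma$ with left translation, while you reach it by taking $X=\{*\}$; both routes land on the left regular representation, so there is no real discrepancy there.
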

\begin{proof}
Set $H=l^2(X)$ with the basis of the characteristic functions $\delta_x$ of individual points $x\in X$. Let $a\in\mathbb C[\Gamma]$, $a=\lambda_1g_1+\ldots+\lambda_rg_r$, be a finite linear combination of group elements $g_1,\ldots,g_r\in\Gamma$, $\lambda_1,\ldots,\lambda_r\in\mathbb C$. As $g\delta_x=\delta_{gx}$, $g^{-1}\delta_x=\delta_{g^{-1}x}$, the matrix of the operator on $l^2(\Gamma)$ given by $a$ has no more than $r$ non-zero entries in each line and in each column. This proves inclusion of $C^*_r(\Gamma)$ in $\mathbb B_f(H)$. Operators of multiplication by functions on $X$ are diagonal, hence trivially lie in $\mathbb B_f(H)$. The case $X=\Gamma$ proves Lemma for $C^*_r(\Gamma)$.

\end{proof}

\begin{lem}
Let $\mathbb F_n$ be a free group on $n$ generators. Then the full group $C^*$-algebra $C^*(\mathbb F_n)$ admits an injective $*$-homomorphism into $\mathbb B_f(H)$.

\end{lem}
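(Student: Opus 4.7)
The plan is to embed $C^*(\mathbb F_n)$ via a direct sum of left regular representations of finite quotients of $\mathbb F_n$, in the spirit of Lemma \ref{lem-action}. In the reduced case only the (single) regular representation is needed; for the full $C^*$-algebra I need to aggregate enough finite-quotient representations to reproduce the universal norm.

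First, I would fix a countable family of surjections $\pi_i\colon\mathbb F_n\twoheadrightarrow G_i$ onto finite groups $G_i$ with the property that the composed representations $\rho_i := \lambda_i\circ\pi_i$, where $\lambda_i$ is the left regular representation of $G_i$ on $\ell^2(G_i)$, jointly separate $C^*(\mathbb F_n)$. The existence of such a family is the substantive input: it is a consequence of Choi's theorem that $C^*(\mathbb F_n)$ is residually finite-dimensional, combined with the fact that finite-dimensional unitary representations of $\mathbb F_n$ with finite image are pointwise dense among all finite-dimensional unitary representations (Exel--Loring). Together these imply that the norm of any $a\in C^*(\mathbb F_n)$ is attained, up to arbitrarily small error, on some representation factoring through a finite quotient.

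Having secured this family, I would set $H = \bigoplus_{i\in\mathbb N}\ell^2(G_i)$, with the orthonormal basis obtained by uniting the canonical bases $\{\delta_g\}_{g\in G_i}$ of each summand. The direct sum $\rho = \bigoplus_i\rho_i\colon\mathbb F_n\to U(H)$ extends by universality of $C^*(\mathbb F_n)$ to a $*$-homomorphism $\tilde\rho\colon C^*(\mathbb F_n)\to\mathbb B(H)$, which is injective by the separating property. For each $g\in\mathbb F_n$, $\rho(g)$ is a block-diagonal direct sum of permutation matrices, hence $\rho(g)\in\mathbb B^{(1)}(H)$; by Lemma \ref{sum}, every finite linear combination in $\mathbb C[\mathbb F_n]$ is sent into $\mathbb B^{(\infty)}(H)$. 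Since $\mathbb C[\mathbb F_n]$ is norm-dense in $C^*(\mathbb F_n)$ and $\tilde\rho$ is continuous, $\tilde\rho(C^*(\mathbb F_n))\subset\overline{\mathbb B^{(\infty)}(H)} = \mathbb B_f(H)$.

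The main obstacle is the opening step: the separating property of finite-quotient regular representations does not follow from residual finiteness of $\mathbb F_n$ alone and requires nontrivial input about the full $C^*$-algebras of free groups. Everything that follows is routine, mirroring the template of Lemma \ref{lem-action}, the one essential novelty being that using \emph{many} finite quotients in parallel — rather than a single infinite regular representation — suffices to recover the full (rather than reduced) norm while keeping generators permutation-like.
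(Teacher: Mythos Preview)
Your proposal is correct and follows essentially the same route as the paper: the key input is that the universal norm on $\mathbb C[\mathbb F_n]$ is recovered by representations factoring through finite quotients (the paper cites \cite{Lubotzky}, while you invoke Choi and Exel--Loring), after which one assembles the direct sum of the regular representations of a separating family of finite quotients and observes, exactly as in Lemma~\ref{lem-action}, that group elements act by permutation matrices and hence land in $\mathbb B^{(1)}(H)$. Your write-up is simply more explicit about the construction that the paper compresses into the phrase ``by Lemma~\ref{lem-action}''.
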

\begin{proof}
It is shown in \cite{Lubotzky}, Proposition 2.3, that the universal norm on $\mathbb C[\mathbb F_n]$ equals the supremum of the norms of representations of $\mathbb F_n$ which factor through a finite quotient. By Lemma \ref{lem-action}, $C^*(\mathbb F_n)$ embeds into $\mathbb B_f(H)$.

\end{proof}

Let $X$ be a metric space of bounded geometry, which means that, for any $R>0$, the number of points in each ball of raduis $R$ is uniformly bounded. The uniform Roe algebra $C^*(X)$ \cite{Roe} is generated on $H=l^2(X)$ by operators $a\in\mathbb B(H)$, whose matrices satisfy $a_{xy}=0$ if $d(x,y)>R$, $R>0$ (here $x,y\in X$, $a_{xy}=\langle\delta_x,a\delta_y$). 

\begin{lem}\label{Roe}
The uniform Roe algebra $C^*(X)$ for metric spaces of bounded geometry admits an injective $*$-homomorphism into $\mathbb B_f(H)$.

\end{lem}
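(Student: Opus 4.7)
The plan is to show that the tautological inclusion $C^*(X)\hookrightarrow\mathbb B(H)$ actually lands inside $\mathbb B_f(H)$, which immediately gives an injective $*$-homomorphism since both algebras act faithfully on the same Hilbert space $H=l^2(X)$.

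First I would unpack the bounded geometry hypothesis: for every $R>0$ there is a uniform bound $N(R)\in\mathbb N$ on the cardinality of each metric ball of radius $R$ in $X$. This is the only quantitative input needed. Next I would take a generator $a\in\mathbb B(H)$ of $C^*(X)$, i.e., an operator of finite propagation $R$, so that $a_{xy}=0$ whenever $d(x,y)>R$. For a fixed $x\in X$, the nonzero entries $a_{xy}$ are indexed by $y\in B(x,R)$, hence there are at most $N(R)$ of them; the same bound applies columnwise. Therefore $a\in\mathbb B^{(N(R))}(H)\subset\mathbb B^{(\infty)}(H)\subset\mathbb B_f(H)$.

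Since the set of finite-propagation operators is a dense $*$-subalgebra of $C^*(X)$, and since $\mathbb B_f(H)$ is norm-closed by definition, we conclude $C^*(X)\subset\mathbb B_f(H)$. The inclusion is a $*$-homomorphism (it is the restriction of the identity on $\mathbb B(H)$) and is injective for the same reason. There is essentially no obstacle here; the only thing to notice is that bounded geometry is precisely what converts ``finite propagation'' into the uniform row/column bound required by the definition of $\mathbb B^{(\infty)}(H)$, and that closedness of $\mathbb B_f(H)$ takes care of the limits defining $C^*(X)$.
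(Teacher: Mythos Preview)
Your argument is correct and is exactly the reasoning the paper has in mind; the paper's own proof simply reads ``Obvious.'' You have just made explicit what that word encodes: bounded geometry gives a uniform bound $N(R)$ on ball cardinalities, so each finite-propagation generator lies in some $\mathbb B^{(N(R))}(H)$, and closedness of $\mathbb B_f(H)$ handles the rest.
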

\begin{proof}
Obvious.

\end{proof}

Let $V=\mathbb N$, and let $G=(V,E)$ be an infinite, uniformly locally finite (ULF) graph, i.e. $E$ denotes the set of edges of $G$, two vertices $v,w\in V$ are adjacent ($v\sim w$) if there is an edge $e\in E$ that connects $v$ and $w$, each vertex $v$ has a finite number $\deg v$ of adjacent vertices, and these numbers $\deg v$ are uniformly bounded on $V$. Let $A_G=((a_G)_{v,w})_{v,w\in V}$ be the adjacency matrix, i.e. the infinite matrix defined by $(a_G)_{v,w}=\left\lbrace\begin{array}{rl}1,&\mbox{if \ }v\sim w;\\0,&\mbox{otherwise.}\end{array}\right.$ Each line and each column of the matrix $A_G$ has no more than $k$ non-zero entries for some $k\in\mathbb N$. By Lemma \ref{ulf-matrix}, this matrix determines a bounded operator $a_G$, called adjacency operator. Let $B(G)=C^*(l^\infty(V),a_G)$ be the $C^*$-subalgebra of $\mathbb B(l^2(V))$ generated by (operators of multiplication by) $l^\infty(V)$ and by $a_G$. 

\begin{lem}
The $C^*$-algebra $B(G)$ defined above admits an injective $*$-homomorphism into $\mathbb B_f(H)$.

\end{lem}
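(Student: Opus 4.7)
The plan is to show directly that $B(G)$ already sits inside $\mathbb{B}_f(H)$ as a $C^*$-subalgebra of $\mathbb{B}(l^2(V))$, so the inclusion serves as the required injective $*$-homomorphism. Since $\mathbb{B}_f(H)$ is norm-closed and closed under sums, products and adjoints, it suffices to check that the two sets of generators, namely (the image of) $l^\infty(V)$ and the adjacency operator $a_G$, both lie in $\mathbb{B}_f(H)$.

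First, I would note that for each $f\in l^\infty(V)$, the multiplication operator $M_f$ on $l^2(V)$ has a diagonal matrix in the basis $\{\delta_v\}_{v\in V}$. Thus $M_f\in\mathbb{B}^{(1)}(H)\subset\mathbb{B}^{(\infty)}(H)\subset\mathbb{B}_f(H)$, with no approximation needed.

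Next, I would handle $a_G$. By the ULF assumption, there is some $k\in\mathbb{N}$ such that $\deg v\leq k$ for all $v\in V$. By definition of the adjacency matrix, $(a_G)_{v,w}\in\{0,1\}$ and the number of non-zero entries in the $v$-th line (resp.\ $w$-th column) equals $\deg v\leq k$ (resp.\ $\deg w\leq k$). Hence the matrix $A_G$ has at most $k$ non-zero entries in each line and each column, and all entries are bounded by $1$; by Lemma \ref{ulf-matrix} the matrix defines a bounded operator, and this operator $a_G$ belongs to $\mathbb{B}^{(k)}(H)\subset\mathbb{B}_f(H)$.

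Since $\mathbb{B}_f(H)$ contains both generating sets and is a $C^*$-algebra, it contains the $C^*$-subalgebra $B(G)=C^*(l^\infty(V),a_G)$ they generate inside $\mathbb{B}(l^2(V))$. The inclusion $B(G)\hookrightarrow\mathbb{B}_f(H)$ is then an injective $*$-homomorphism, as required. There is no genuine obstacle here --- this is exactly the sort of case where the author writes ``Obvious''; the only thing that had to be checked is the uniform bound $\deg v\leq k$ that makes $a_G$ lie in some $\mathbb{B}^{(k)}(H)$, and this is built into the ULF hypothesis.
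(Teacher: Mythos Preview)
Your proposal is correct and is exactly the unpacking of the paper's one-word proof ``Obvious'': you verify that the generators $l^\infty(V)$ (diagonal, hence in $\mathbb{B}^{(1)}(H)$) and $a_G$ (in $\mathbb{B}^{(k)}(H)$ by the ULF bound) lie in $\mathbb{B}_f(H)$, so the $C^*$-algebra they generate is contained in $\mathbb{B}_f(H)$ and the inclusion is the desired injective $*$-homomorphism.
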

\begin{proof}
Obvious.

\end{proof}

\begin{problem}
Are there separable $C^*$-algebras that do not admit an injective $*$-homomorphism into $\mathbb B_f(H)$?

\end{problem}

\section{Contractibility of the group of invertibles}

\begin{thm}
The group $GL_f(H)$ of invertible elements of $\mathbb B_f(H)$ is contractible. 

\end{thm}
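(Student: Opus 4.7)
The plan is to carry out Kuiper's contractibility argument for $GL(H)$ inside the subalgebra $\mathbb B_f(H)$. The key structural input is the stability isomorphism $\mathbb B_f(H)\cong M_2(\mathbb B_f(H))$ from Lemma~\ref{matrix}, implemented by the basis-splitting isometries $s_1,s_2\in\mathbb B^{(1)}(H)$ defined by $s_1e_n=e_{2n-1}$, $s_2e_n=e_{2n}$ (satisfying $s_i^*s_j=\delta_{ij}I$ and $s_1s_1^*+s_2s_2^*=I$). An infinite basis partition $\mathbb N=\bigsqcup_{n\geq 0}I_n$ into infinite parts gives further basis-permuting isometries $V_n\in\mathbb B^{(1)}(H)$ with pairwise orthogonal ranges summing in the strong sense to $I$; these let me realize countable block-diagonal direct sums $\bigoplus_n A_n$ inside $\mathbb B_f(H)$ whenever the blocks share a uniform row/column sparsity bound, with sparsity controlled by Lemmas~\ref{sum} and~\ref{product}.

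The first step is a Whitehead-type null-homotopy inside $GL_2(\mathbb B_f(H))$. The classical factorization
$$\begin{pmatrix}T & 0\\0 & T^{-1}\end{pmatrix}=\begin{pmatrix}1 & T\\0 & 1\end{pmatrix}\begin{pmatrix}1 & 0\\-T^{-1} & 1\end{pmatrix}\begin{pmatrix}1 & T\\0 & 1\end{pmatrix}\begin{pmatrix}0 & -1\\1 & 0\end{pmatrix}$$
combined with straight-line homotopies of the three unipotent factors to $I_2$ (which stay invertible because they have unit diagonal) and the rotation homotopy $\theta\mapsto\bigl(\begin{smallmatrix}\cos\theta & -\sin\theta\\\sin\theta & \cos\theta\end{smallmatrix}\bigr)$, $\theta\in[0,\pi/2]$, of the final factor, produces a continuous path $\gamma_T\colon[0,1]\to GL_2(\mathbb B_f(H))$ from $I_2$ to $\operatorname{diag}(T,T^{-1})$, depending continuously on $T$ in the norm topology.

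The second step is an Eilenberg swindle. After restricting to the dense subgroup $\{T\in GL_f(H):T,T^{-1}\in\mathbb B^{(\infty)}(H)\}$ (dense by openness of $GL_f(H)$ in $\mathbb B_f(H)$ and density of $\mathbb B^{(\infty)}(H)$), I would form the alternating amplification
$$A_T=\sum_{n\geq 0}V_nT^{(-1)^n}V_n^*\in\mathbb B_f(H)\cap GL_f(H).$$
Two different regroupings of the blocks with simultaneous application of $\gamma$ give homotopies: pairing $(V_0,V_1),(V_2,V_3),\ldots$ and cancelling each $(T,T^{-1})$-pair via $\gamma_T$ shows $A_T\simeq I$; leaving $V_0$ alone and pairing $(V_1,V_2),(V_3,V_4),\ldots$ with $\gamma_{T^{-1}}$ shows $A_T\simeq V_0TV_0^*+(I-V_0V_0^*)$. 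Concatenating, the \emph{one-block amplification} $\Phi(T):=V_0TV_0^*+(I-V_0V_0^*)$ is null-homotopic in $GL_f(H)$, continuously in $T$.

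The remaining step, and main technical obstacle, is converting the null-homotopy of $\Phi(T)$ into one of $T$ itself. Setting $V_0=s_1$, the element $\Phi(T)$ corresponds under the isomorphism $\sigma\colon\mathbb B_f(H)\cong M_2(\mathbb B_f(H))$ to $\operatorname{diag}(T,I)\in GL_2(\mathbb B_f(H))$, while $T$ itself corresponds to the generic matrix $\sigma(T)=(s_i^*Ts_j)_{i,j=1}^2$. I would bridge between them by iterating the swindle at the $M_2$ level: the doubled basis-permuting isometries $s_is_j\in\mathbb B^{(1)}(H)$ give a stability $M_2(\mathbb B_f(H))\cong M_4(\mathbb B_f(H))$ that accommodates a Whitehead--Eilenberg cascade shrinking the off-diagonal part of $\sigma(T)$ to zero through a continuous family of invertibles. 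Concatenation with the null-homotopy of $\Phi(T)$ yields the contraction on the dense subgroup, and uniform continuity of the explicit formulae (together with openness of $GL_f(H)$ in $\mathbb B_f(H)$, so that small norm-perturbations of a contracting path remain invertible) extends it to all of $GL_f(H)$.
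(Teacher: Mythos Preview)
Your approach is genuinely different from the paper's: the paper reruns Kuiper's original geometric argument (finite-dimensional approximation of a map from a sphere, choice of the auxiliary vectors $a_i,a'_i$, rotations in $2$-planes, then a swindle on the complementary summand), and the adaptation to $\mathbb B_f(H)$ consists in verifying that the $a_i,a'_i$ can be taken from the fixed orthonormal basis so that every projection and rotation occurring in Kuiper's homotopies stays in $\mathbb B^{(m)}(H)$ for some $m$. Your proposal instead tries to build a global contraction directly from the Whitehead identity and an Eilenberg swindle with the basis-splitting isometries.

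Your Steps~1--2 are correct and do yield, continuously in $T$, a path in $GL_f(H)$ from $\Phi(T)=s_1Ts_1^*+s_2s_2^*$ to $I$. (Incidentally, the restriction to the set $\{T:T,T^{-1}\in\mathbb B^{(\infty)}(H)\}$ is both unnecessary and unjustified: it is not clear that $T^{-1}\in\mathbb B^{(\infty)}(H)$ when $T\in\mathbb B^{(\infty)}(H)\cap GL_f(H)$, but on the other hand the infinite block-diagonal $\bigoplus_n B$ lies in $\mathbb B_f(H)$ for every $B\in\mathbb B_f(H)$, so $A_T$ and the Whitehead paths already make sense for all $T\in GL_f(H)$.)

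The real gap is Step~3. Having $\Phi\simeq\mathrm{const}_I$ does \emph{not} by itself give $\mathrm{id}\simeq\mathrm{const}_I$; you still need a homotopy $T\simeq\Phi(T)$, continuous in $T$. Your proposed ``Whitehead--Eilenberg cascade shrinking the off-diagonal part of $\sigma(T)$'' is not an argument: linearly damping the off-diagonal blocks of $\sigma(T)=(s_i^*Ts_j)_{i,j}$ need not stay invertible. For a concrete obstruction, take $T$ to be the basis permutation swapping $e_{2n-1}\leftrightarrow e_{2n}$; then $s_1^*Ts_1=s_2^*Ts_2=0$, so the ``diagonal part'' of $\sigma(T)$ is the zero matrix and any straight-line shrinking of the off-diagonals leaves $GL_2(\mathbb B_f(H))$ immediately. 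Passing to $M_4$ via $s_is_j$ does not help: you would still need, at some point, to relate a generic invertible $2\times2$ block matrix to a diagonal one \emph{through invertibles and uniformly in $T$}, and Gaussian elimination requires invertibility of a pivot block, which you do not have. This is exactly the step where Kuiper's proof (and the paper's) uses compactness of the parameter space and the geometric choice of orthogonal frames; the purely algebraic swindle does not supply it.

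If you only want $\pi_n(GL_f(H))=0$ for all $n$, that already follows from your Steps~1--2 together with the observation that for a map $f:\mathbb S^n\to GL_f(H)$ one has $f\simeq\Phi\circ f$ via the rotation in $GL_2(\mathbb B_f(H))$ swapping $\operatorname{diag}(f(s),I)$ with $\operatorname{diag}(I,f(s))$ and a further swindle in the second block---but even this needs to be written out, and to pass from weak contractibility to contractibility you must then invoke that $GL_f(H)$, as an open subset of a Banach space, has the homotopy type of a CW complex. None of this is in your write-up. As it stands, Step~3 is a placeholder rather than a proof.
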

\begin{proof}
The proof follows the original proof of Kuiper \cite{Kuiper}. Let $f:\mathbb S^n\to GL_f(H)$ be a continuous map from a sphere. A preliminary step is to deform $f$ to a map $f_1$ such that $f_1(\mathbb S^n)$ is contained in the linear span of some $g_1,\ldots,g_N\in GL_f(H)$. After a small deformation, we may assume that there exists $k\in\mathbb N$ such that $g_j\in GL^{(k)}(H)$, $j=1,\ldots, N$, where $GL^{(k)}(H)=GL_f(H)\cap\mathbb B^{(k)}(H)$.

Then we have to find a sequence of unit vectors $a_i$, $a'_i$, and of finitedimensional subspaces $A_i\subset H$ with the following properties:

\begin{itemize}
\item
$A_i=\operatorname{Span}(a_i,g_1(a_i),\ldots,g_N(a_i),a'_i)$;
\item
$a'_i\perp a_i,g_1(a_i),\ldots,g_N(a_i)$, and $a'_i\perp A_l$ for $l<i$;
\item
$a_i\in \cap_{l=1}^{i-1}\left[A_l^\perp\cap\left(\cap_{j=1}^Ng_j^{-1}(A_l^\perp)\right)\right]$.
\end{itemize}

We claim that if $g_j\in GL^{(k)}(H)$ then the vectors $a_i$, $a'_i$, $i\in\mathbb N$, can be taken from the basis $\{e_n\}_{n\in\mathbb N}$ of $H$.

Indeed, one can start with $a_1=e_1$. As $g_j\in GL^{(k)}(H)$, there exists $r_1$ such that $a_1,g_1(a_1),\ldots,g_N(a_1)\in L_{r_1}$, where $L_n$ denotes the linear span of the vectors $e_1,\ldots,e_n$. Then one can take $a'_1=e_{r_1+1}$, and we have $A_1\subset L_{r_1+1}$.

Now assume that $a_1,\ldots,a_{i-1}$ and $a'_1,\ldots,a'_{i-1}$ are already fixed, and $a_1,\ldots,a_{i-1}\subset L_n$ for some $n\in\mathbb N$. Then we can find $a_i\in L_n^\perp$ such that $g_j(a_i)\in L_n^\perp$, $j=1,\ldots, N$. 
Indeed, consider $g_1(e_m)$, $m>n$. There are no more than $k$ non-zero coordinates among the first $n$ coordinates of $g_1(e_m)$ for each $m$. Then, as each line of the matrix of $g_1(e_m)$ cannot contain more than $k$ non-zero entries, the number of $m$'s (i.e. the number of columns) for which there is a non-zero coordinate among the first $n$ coordinates of $g_1(e_m)$ is finite (not greater than $kn$). The same holds for each $j=1,\ldots, N$, so there is some $m$ such that the first $n$ coordinates of $g_j(e_m)$, $j=1,\ldots,N$, are all zeroes.  

The next step of homotopy in \cite{Kuiper} is rotation of the two-dimensional subspace spanned by $g(a_i)$ and $a'_i$, where $g\in\operatorname{Span}(g_1,\ldots,g_N)$, and then another rotation in the two-dimensional subspace spanned by $a'_i$ and $a_i$. Note that, although $\mathbb B^{(k)}(H)$ is not a linear space, we have $g\in GL^{(kN)}(H)$. Both rotations involve only finite number of coordinates, hence this part of the homotopy lies in $GL^{m}(H)$ for some $m\in\mathbb N$ and connects $f_1$ with $f_2$ such that $f_2(s)a_i=|f_1(s)a_i|a_i$, $s\in\mathbb S^n$, $i\in\mathbb N$.

Let $H'=\operatorname{Span}(a_1,a_2,\ldots)$, $H_1=(H')^\perp$. The next step of homotopy connects $f_2$ with $f_3$ such that
$(f_3)|_{H_1}=(f_2)|_{H_1}$ and $f_3(s)|_{H'}=\operatorname{id}$. This homotopy only proportionally changes lengths of $f_2(s)a_i$, so does not change the number of non-zero entries. Let $p'$ and $p_1$ denote the projections onto $H'$ and $H_1$ respectively. Then $f_3(s)=p'+f_2(s)p_1$. The next step of homotopy connects $f_3(s)$ with $f_4(s)=p'+p_1f_2(s)p_1$, and does not increase the number of non-zero entries.

Recall that $H'$ is spanned by an infinite set of the vectors from the basis, so we can write it as an infinite sum of infinitedimensional subspaces $H'=H_2\oplus H_3\oplus\cdots$, each of which is spanned by an infinite set of the vectors of the basis. Then, with respect to the decomposition $H=H_1\oplus H_2\oplus H_3\oplus\cdots$, we can write $f_3(s)=\left(\begin{smallmatrix}u&&&\\&1&&\\&&1&\\&&&\ddots\end{smallmatrix}\right)$ with $u$ having in each line and in each column mot more that $C$ non-zero entries for some $C$. The standard homotopy that connects $\left(\begin{smallmatrix}u&\\&u^{-1}\end{smallmatrix}\right)$ with $\left(\begin{smallmatrix}1&\\&1\end{smallmatrix}\right)$ (applied block-diagonal-wise) increases the number of non-zero entries in each line and in each column only twice, so the last step of homotopy that connects $f_4$ with $f_5$, where $f_5(s)=1$, also lies within $GL_f(H)$.

\end{proof}

\begin{cor}
$K_0(\mathbb B_f(H))=K_1(\mathbb B_f(H))=0$. In particular, the class of the projection from Proposition \ref{projection} is trivial.

\end{cor}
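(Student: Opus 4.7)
The plan is to express both $K$-groups as homotopy groups of $GL_\infty(\mathbb B_f(H))$ and then read off their vanishing from the contractibility theorem together with Lemma~\ref{matrix}. I do not expect any real obstacle here: the analytic work is already contained in the theorem, and this corollary should be a purely formal consequence.

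By Lemma~\ref{matrix} there is a $*$-isomorphism $M_m(\mathbb B_f(H))\cong\mathbb B_f(H)$ for every $m\in\mathbb N$, which restricts to a homeomorphism of topological groups $GL_m(\mathbb B_f(H))\cong GL_f(H)$; hence each $GL_m(\mathbb B_f(H))$ is contractible by the theorem. For $K_1$, I would invoke the standard identification $K_1(\mathbb B_f(H))=\pi_0(GL_\infty(\mathbb B_f(H)))$, where $GL_\infty$ is the direct limit of the $GL_m$ under the stabilisation $g\mapsto g\oplus 1$. Since each finite stage is path-connected, so is the direct limit, and therefore $K_1(\mathbb B_f(H))=0$.

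For $K_0$, the cleanest route is Bott periodicity in the form $K_0(A)\cong\pi_1(GL_\infty(A))$ for a unital $C^*$-algebra $A$ (as in Blackadar's \emph{K-Theory for Operator Algebras}, Chapter~9). Each $GL_m(\mathbb B_f(H))$ is contractible, so $\pi_1$ vanishes at every finite stage, and passing to the direct limit yields $\pi_1(GL_\infty(\mathbb B_f(H)))=0$, whence $K_0(\mathbb B_f(H))=0$. The consequence for the projection $p$ from Proposition~\ref{projection} is then immediate: $[p]$ is an element of the trivial group. If one preferred to avoid an appeal to Bott periodicity, an alternative would be to observe that $\mathbb B_f(H)$ contains the two isometries $v_1(e_n)=e_{2n-1}$ and $v_2(e_n)=e_{2n}$ (both lying in $\mathbb B^{(1)}(H)$) satisfying $v_1v_1^*+v_2v_2^*=1$, which makes $\mathbb B_f(H)$ properly infinite and allows one to imitate Kuiper's original deduction that $K_0(\mathbb B(H))=0$ in the present setting.
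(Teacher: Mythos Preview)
Your argument is correct and essentially the same as the paper's: use Lemma~\ref{matrix} together with the contractibility theorem to see that each $GL_m(\mathbb B_f(H))$ is contractible, and then read off $K_0$ and $K_1$ from the identification $K_i(A)\cong\pi_{i\pm 1}(GL_\infty(A))$ (the paper cites Wood for this, you cite Blackadar). Your alternative route via proper infiniteness is extra and is not used in the paper's proof of this corollary, though the paper does record proper infiniteness elsewhere.
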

\begin{proof}
This follows from Lemma \ref{matrix} and from the isomorphism $K_i(A)\cong\pi_{i\pm 1}(GL_\infty(A))$, $i\in\mathbb Z/2$, \cite{Wood}, where $A$ is a $C^*$-algebra, $GL(A)$ denotes the group of invertibles in $A$, and $GL_\infty(A)=\lim_{n\to\infty}GL(M_n(A))$.

\end{proof}

\section{Miscellanea}

\begin{lem}
The $C^*$-algebra $\mathbb B_f(H)$ has no polar decomposition, hence is not an $AW^*$-algebra.

\end{lem}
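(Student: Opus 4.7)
The plan is to exhibit an operator $a\in\mathbb{B}_f(H)$ whose polar decomposition $a=u|a|$, computed in the ambient $\mathbb{B}(H)$, forces $u\notin\mathbb{B}_f(H)$. Since the partial isometry in a polar decomposition is uniquely determined by the requirement that $u^*u$ equal the support projection of $|a|$, producing such an $a$ immediately shows that $\mathbb{B}_f(H)$ is not closed under polar decomposition. The ``hence'' in the statement is then the classical fact (Kaplansky, Berberian) that every $AW^*$-algebra admits polar decomposition; failure of polar decomposition rules out the $AW^*$ property.

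For the construction I would recycle the isometry $v$ from Proposition \ref{isometry}, which already lives outside $\mathbb{B}_f(H)$, together with the positive diagonal operator $d$ defined by $de_n=\frac{1}{n}e_n$, and set $a=vd$. Three short verifications then finish the argument. First, $\|ae_n\|=\|a_n\|/n=1/n$, so $a$ is compact as the norm limit of its finite-rank column-truncations, and hence lies in $\mathbb{B}_f(H)$ by Lemma \ref{compact} (note that $a$ itself is \emph{not} in any $\mathbb{B}^{(k)}(H)$, since its $n$-th column is $a_n/n$, which has $n$ nonzero entries; the inclusion in $\mathbb{B}_f(H)$ is genuinely via compactness). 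Second, because $v$ is an isometry, $a^*a=dv^*vd=d^2$, so $|a|=d$, which has trivial kernel and dense range; in particular the support projection of $|a|$ equals $1$, so the partial isometry in the polar decomposition is in fact an isometry defined on all of $H$. Third, comparing $ae_n=a_n/n$ with $u|a|e_n=\frac{1}{n}u(e_n)$ forces $u(e_n)=a_n=v(e_n)$, and so $u=v$.

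There is no serious obstacle here; the example is essentially handed to us by Proposition \ref{isometry}, and multiplying $v$ on the right by the diagonal compact $d$ is exactly the scaling trick that moves $v$ into the compact ideal without altering its partial-isometry part. The one point worth emphasizing is the choice of $d$ with trivial kernel, which is what pins $u$ down uniquely on all of $H$ and therefore precludes any alternative partial isometry in $\mathbb{B}_f(H)$ from serving as the polar factor of $a$.
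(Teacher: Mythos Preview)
Your proof is correct and is essentially the same as the paper's: the paper also sets $a=vh$ with $v$ the isometry of Proposition~\ref{isometry} and $h$ a diagonal compact with trivially chosen nonzero eigenvalues tending to $0$, then invokes compactness (Lemma~\ref{compact}) for $a\in\mathbb{B}_f(H)$ and injectivity of $h$ for uniqueness of the polar factor $v\notin\mathbb{B}_f(H)$. Your choice $d e_n=\frac{1}{n}e_n$ is just a specific instance of the paper's $h$, and your explicit verifications of $|a|=d$ and $u=v$ fill in details the paper leaves implicit.
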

\begin{proof}
Let $v\in\mathbb B(H)$ denote the isometry from Proposition \ref{isometry}. Let $h$ be the compact operator given by $h(e_n)=\lambda_ne_n$, with $\lambda_n\neq 0$ for any $n\in\mathbb N$ and $\lim_{n\to\infty}\lambda_n=0$. Set $a=vh$, where $v$ was defined in Proposition \ref{isometry}. Compactness of $a$ implies that $a\in\mathbb B_f(H)$, and $\Ker h=0$ means that $v$ is determined in a unique way. As $v\notin\mathbb B_f(H)$, there is no polar decomposition within $\mathbb B_f(H)$. Therefore $\mathbb B_f(H)$ is not a Rickart algebra, hence not an $AW^*$-algebra \cite{Ara}.

\end{proof}

\begin{lem}
The stable topological rank of $\mathbb B_f(H)$ is $\infty$.

\end{lem}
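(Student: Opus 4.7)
The plan is to exhibit a proper (non-unitary) isometry inside $\mathbb B_f(H)$ and then invoke a theorem of Rieffel (Proposition~6.5 of \cite{Rieffel}) asserting that any unital $C^*$-algebra containing such an element automatically has topological stable rank equal to $\infty$. The required witness is the unilateral shift, which respects the fixed basis in the strongest possible way.

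Concretely, define $S\in\mathbb B(H)$ by $Se_n=e_{n+1}$ for every $n\in\mathbb N$. The matrix entries are $S_{ij}=\delta_{i,j+1}$, so each row and each column contains at most one non-zero entry; hence $S\in\mathbb B^{(1)}(H)\subset\mathbb B_f(H)$. A direct computation gives $S^*S=1$ while $SS^*=1-p_{e_1}\neq 1$, where $p_{e_1}$ is the rank-one projection onto $\mathbb C e_1$, so $S$ is a proper (non-unitary) isometry lying in $\mathbb B_f(H)$. This is in pointed contrast to the isometry of Proposition~\ref{isometry}, which does \emph{not} lie in $\mathbb B_f(H)$; the difference is that the shift moves basis vectors exactly to basis vectors, so no approximation issue arises.

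Applying Rieffel's proposition to $S\in\mathbb B_f(H)$ then yields the conclusion immediately. If one prefers an argument internal to the paper, the iterated projections $S^k(1-SS^*)(S^*)^k$ for $k\geq 0$ are mutually orthogonal rank-one projections in $\mathbb B_f(H)$, each Murray--von Neumann equivalent to $1-SS^*$ through the partial isometries $S^k\in\mathbb B^{(1)}(H)$, so $\mathbb B_f(H)$ is properly infinite; the standard argument (which is the content of Rieffel's proposition) then shows that for every $n\in\mathbb N$ the set of $n$-tuples $(a_1,\ldots,a_n)\in\mathbb B_f(H)^n$ with $\sum_{i=1}^n a_i^*a_i$ invertible fails to be dense in $\mathbb B_f(H)^n$, which gives $\operatorname{tsr}(\mathbb B_f(H))\geq n+1$. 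The main obstacle is therefore nothing more than identifying the non-unitary isometry within $\mathbb B_f(H)$, and the shift supplies this at once.
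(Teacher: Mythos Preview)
Your main argument is correct and is essentially the paper's approach: exhibit a basis-respecting non-unitary isometry in $\mathbb B^{(1)}(H)\subset\mathbb B_f(H)$ and invoke Rieffel. The paper uses the pair $v_1(e_n)=e_{2n-1}$, $v_2(e_n)=e_{2n}$ to get proper infiniteness directly, while you use the unilateral shift $S$ and Rieffel's Proposition~6.5; either route lands in the same place.

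One caveat: your parenthetical ``internal'' argument does not establish proper infiniteness. The projections $S^k(1-SS^*)(S^*)^k$ are infinitely many orthogonal copies of the \emph{rank-one} projection $1-SS^*$, not of $1$; this alone does not give $1\sim p\perp q\sim 1$ (indeed, the Toeplitz algebra contains $S$ but is not properly infinite, by $K_0$). If you want proper infiniteness you should instead use isometries like $v_1,v_2$ above, which also lie in $\mathbb B^{(1)}(H)$. This does not affect your main proof, which goes through via Rieffel's result on non-unitary isometries.
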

\begin{proof}
As in $\mathbb B(H)$, the unit projection $1$ is the sum of the projections onto the subspaces spanned by odd and by even vectors of the basis, each of which is Murray--von Neumann equivalent to $1$, and both isometries $v_1$ and $v_2$ given by
$v_1(e_n)=e_{2n-1}$, $v_2(e_n)=e_{2n}$, $n\in\mathbb N$, lie in $\mathbb B_f(H)$, hence $\mathbb B_f(H)$ is properly infinite, hence has infinite stable topological rank \cite{Rieffel}.

\end{proof}

\begin{problem}
Calculate the real rank of $\mathbb B_f(H)$.

\end{problem}

\noindent
{\bf Acknowledgement.} The author is grateful to E. Troitsky for fruitful discussions.

\end{document}